  \newcounter{itemizedlistcounter}  
\newenvironment{itemized}
  {\begin{list}
     {(\arabic{itemizedlistcounter})} 
     {\usecounter{itemizedlistcounter}   
      \setlength{\leftmargin}{0.5em}} 
  }
  {\end{list}}
\author[F. Cabello S\'anchez]{F\'elix Cabello S\'anchez}
\address{Departamento de Matem\'aticas and IMUEx, Universidad de Extremadura\\
Avenida de Elvas, 06071-Badajoz, Espa\~na.
\newline
Orcid Id: 0000-0003-0924-5189}
\email{fcabello@unex.es}
\newcommand{\eps}{\varepsilon}
\newcommand {\N}{\mathbb N}
\newcommand {\R}{\mathbb R}
\renewcommand{\leq}{\ensuremath{\leqslant}}
\renewcommand{\geq}{\ensuremath{\geqslant}}
\renewcommand{\dim}{\ensuremath{\mathop{\rm dim\,}}}
\newcommand{\supp}{\operatorname{supp}}
\newcommand{\Aut}{\operatorname{Aut}}
\newcommand{\Iso}{\operatorname{Iso}}
\newcommand{\Hol}{\operatorname{Hol}}
\newcommand{\Arg}{\operatorname{Arg}}
\providecommand{\supp}{\mathop{\rm supp}\nolimits}
\newtheorem{thm}{Theorem}[section]
\newtheorem{cor}[thm]{Corollary}
\newtheorem{theorem}[thm]{Theorem}
\newtheorem{lemma}[thm]{Lemma}
\newtheorem{prop}[thm]{Proposition}
\newtheorem{fact}[thm]{Fact}
\newtheorem{noresult}[thm]{ }
\newtheorem*{claim*}{Claim}
\theoremstyle{definition}
\newtheorem{defi} [thm] {Definition}
\newtheorem{prob}[thm]{Problem}
\newtheorem*{prob*}{Problem}
\newtheorem{quest}{Question}
\newcommand{\m}[1]{\marginpar{\tiny{#1}}}
\title{Wheeling around Mazur rotations problem}
\keywords{Mazur rotations problem; transitivity; microtransitivity; semitransitivity; Banach space; Hilbert space}
\thanks{2020 {\it Mathematics Subject Classification}. 46B03, 46B04, 46C15}
\thanks{Supported in part by  PID2019-103961GB-C21 and Junta de Extremadura, Project IB16056.}
\begin{document}

\begin{abstract}
We study Mazur rotations problem focusing on the metric aspects of the action of the isometry group and semitransitivity properties.
\end{abstract}



\maketitle

\section*{Introduction}
Hilbert spaces have the following remarkable property for which they are called {\em transitive}: each point of the unit sphere can be mapped into any other through a (linear, surjective) isometry.

This quickly follows from the 2-dimensional case and the existence of orthogonal complements.

The infamous problem alluded to in the title asks if the Hilbert spaces are the only separable Banach spaces enjoying that {\em rotation} property. 
This question has a considerable pedigree and was in fact included by Banach in his {\em Th\'eorie des Op\'era\-tions Lin\'eaires}, cf. \cite[la remarque \`a la section~5 du chapitre XI]{Banach}.

One can split quite naturally 
Mazur's problem into two parts:
\smallskip

\begin{quest}Is every separable transitive Banach space isomorphic to a Hilbert space?
\end{quest}

\begin{quest}Is every transitive norm on a Hilbert space Euclidean? 
\end{quest}

While the former question has a more isomorphic flavour,
the latter clearly belongs to the isometric theory of Banach spaces.
Both questions remain open to this day, with no solution in sight.

We will not explain here what is known and what is not known about 
these problems since we already have \cite{becerra,CFR} that offer a complete overview on the topic.

Presumably, part of the difficulty of Mazur rotations problem  lies in the existence of a wide variety of separable, almost transitive (AT for short, see Definition~\ref{def:AT} below) Banach spaces and that
 the difference between transitivity and almost transitivity is so subtle that we cannot rule out the possibility that every AT space could be renormed to be transitive; see \cite[Section~1.5]{CFR}. Actually, the only partial answer to Mazur problem that deserves that name is the Ferenczi-Rosendal result \cite[Theorem 28]{FR2} that a separable transitive space is strictly convex (and smooth, of course) and its dual is AT.

\smallskip

The paper develops two ideas already present in \cite{CDKKLM} and \cite{BRP}. The first one is to 
study the qualitative (read topological) aspects of the action of the isometry group on the unit sphere.

The second is semitransitivity: each {transitivity} property has a {\em halved} version which is obtained by replacing (the group of) isometries by (the semigroup of) contractive automorphisms in the definition. Semitransitivity properties tend to be more stable than their transitive ancestors, see \cite[Remark 2.3]{CDKKLM}, \cite[Lemma 2.3]{BRP} or Theorem~\ref{th:UMST} below.

Many of our results are connected with the 
differentiability of the norm. 
The touchstone of any advance on the isometric part of Mazur's problem
regarding smothness of the norm is the almost trivial fact that a norm $\|\cdot\|$ is Euclidean if and only if the function $x\mapsto{1 \over 2}\|x\|^2$ is twice G\^ateaux differentiable at the origin.
 As for the isomorphic part, we have the highly nontrivial fact that $X$ is isomorphic to a Hilbert space
if, and only if, both $X$ and $X^*$ admit G\^ateaux (but in the end Fr\'echet) differentiable bump functions whose differentials are locally Lipschitz, see \cite[Chapter V, Corollary 3.6]{DGZ} and \cite[Section 3]{fabian}.

Let us explain the organization of the paper and highlight
the main results. 

This section contains, apart from this general introduction, 
the notations and conventions to which we will 
adhere.

Section~\ref{sec:act} introduces the basic definitions used along the paper and shows that the isometry group of $L_p(\mu)$ is discrete in the norm topology for $p\neq2$.

Section~\ref{sec:affairs} contains our main results on microtransitive norms. Let $X$ be a Banach space with unit sphere $S$ and  isometry group $\Iso(X)$. 
Let us say that $X$ (or its norm) is $\Lambda$-Lipschitz-transitive ($\Lambda$-LT for short, $\Lambda\ge 1$) if, given $x,y\in S$, there exists $T\in\Iso(X)$ such that $y=Tx$ {and} $\|T-{\bf I}_X\|\le \Lambda\|y-x\|$.  Here, and throughout the paper, ${\bf I}_X$ denotes the identity on $X$. This means that the action of $\Iso(X)$ on $S$ is not only open but even a ``Lipschitz quotient''.
 
We show that either $1$-LT or its halved version characterize Euclidean norms (a result whose proof I owe to Sergei Ivanov), that $\Lambda$-LT norms are analytic, and that a Banach space which is $\Lambda$-LT for some $\Lambda<3$ is isomorphic to a Hilbert space.

Most of Section~\ref{sec:ST} deals with (almost) transitive norms on Hilbert spaces, 
although it opens with an application of Effros's microtransitivity theorem: if $Y$ is a finite-dimensional 1-complemented subspace of a separable transitive Banach space, then $Y$ is uniformly micro-semitransitive (with the meaning that the semigroup of contractive automorphisms of $Y$ ``acts transitively'' on its unit sphere and the action is uniformly open; see Definition~\ref{def:ST}). Semitransitive renormings of Hilbert spaces are characterized by means of tangent ellipsoids.  A number of examples, some of them related to the results of Section~\ref{sec:affairs}, are included. Finally, 
it is shown that any transitive renorming of a separable Hilbert space is twice G\^ateaux differentible off the origin.

The closing Section~\ref{sec:misc} contains some open problems and miscellaneous remarks main\-ly of a bibliographical nature.

The exposition is a bit terse; the reader can find all the necessary background, the basics of the rotations business, and much more, in the already mentioned papers \cite{becerra, CFR}.
This applies especially to the use of ultraproducts, which will not even be defined here.

Although the paper contains no figures its geometric content is obvious. We strongly encourage the reader to make the pertinent drawings.

Some of the results presented here were obtained long time ago and already appeared in my 1996 thesis \cite{th}, written under the supervision of J.M.F. Castillo (from whom I have plagiarised the title of the paper). These are Proposition~\ref{prop:bad}, Theorem~\ref{th:anal}, and Theorem~\ref{th:AThilbert} and its Corollary. I found them very disappointing at that time; 25 years later I look back on them with more affection and I decided to publish them so that they 
become accessible to anyone
and can live their lives.

\subsection*{Notations, conventions}
We consider linear spaces over the field of real numbers; our results extend straighforwardly to complex spaces: after all, each complex space is also a real one.
An Euclidean norm is one that arises from an inner product in the form $|x|=\sqrt{\langle x|x\rangle}$. We tend to use the notation $|\cdot|$, possibly with subscripts, for Euclidean norms.
A Hilbert space is a Banach space whose norm is Euclidean. A complex space is a Hilbert space if and only the underlying real space is. 
The (unit) sphere of a normed space $X$ is the set $S_X=\{x\in X:\|x\|= 1\}$ and the (closed, unit) ball is $B_X=\{x\in X:\|x\|\le 1\}$.
If no risk of confusion arises we omit the subscripts, in which case we denote by $S^*$ the sphere of the dual space $X^*$. The value of $x^*\in X^*$ at $x\in X$ is denoted by $\langle x^*,x\rangle$.

A (not necessarily linear) mapping $f:X\longrightarrow Y$, acting between metric spaces, is said to be Lipschitz if there is a constant $L$ such that $d(f(x),f(y))\le Ld(x,y)$ for all $x,y\in X$. If this holds for $L=1$ we say that $f$ is contractive, or a contraction.

An operator is a bounded linear map. 
The space of operators from $X$ to $Y$ is denoted by $\mathscr L(X,Y)$ or just  $\mathscr L(X)$ if $Y=X$. The group of linear, surjective isometries of $X$ is denoted by $\Iso(X)$, the semigroup of contractive isomorphisms by $\Aut_1(X)$.

Given a Banach space $X$ and $k\ge 2$, we write $\mathscr L(^k X)$ for the space of all bounded $k$-linear forms $a: X^k\to\R$ with the usual norm $\|a\|=\sup\{|a(x_1,\dots, x_k)|: \|x_i\|\leq 1 \;\forall \;i\leq k\}$. (If $k=1$ we write $X^*$ as usual.) This space is canonically isometric to the dual of the $k$-fold projective tensor product $X\widehat{\otimes}\cdots \widehat{\otimes}X$ which endows it with a weak* topology.
A bilinear form $a:X^2\longrightarrow \R$ is positive-definite if there is a constant $c>0$ such that $a(x,x)\geq c\|x\|^2$ for all $x\in X$.

Regarding differentiability issues we have followed \cite[Chapters 4 and 6]{BL}. Since no norm has directional derivatives at zero let us agree that when speaking about a (Gateaux, Fr\'echet, $C^k$) differentiable or analytic norm we mean that it is so off zero.

The function $x\longmapsto \frac{1}{2}\|x\|^2$ (which is clearly differentiable at the origin) is G\^ateaux differentiable at $x$ if and only if there exists a unique $f\in X^*$ such that $\langle f, x\rangle=\|f\|\|x\|$ and $\|f\|=\|x\|$. This $f$, denoted by $J(x)$, agrees with the G\^ateaux differential of the function $\frac{1}{2}\|\cdot\|^2$ at $x$. If $X$ is smooth (which means that $\frac{1}{2}\|\cdot\|^2$ is everywhere G\^ateaux differentiable or, equivalently,
the norm is G\^ateaux differentiable on the sphere) the map $J:X\longrightarrow X^*$ sending $x$ to $J(x)$ is (correctly defined and it is) called the duality map of $X$.

As usual $f(t)=o(t)$ means that $f(t)/t\longrightarrow 0$ as $t\longrightarrow 0$, while $f(t)=O(t)$ means that there is some $\eps>0$ such that $f(t)/t$ is bounded for $|t|\le\eps$.

Other definitions and notations will be introduced as they are needed.

Abreviations used in the text:
\begin{itemize}
\item AT, almost transitive, Definition~\ref{def:AT}.
\item LT, Lipschitz-transitive, $\Lambda$-LT, Definition~\ref{def:LT}.
\item MT, microtransitive, Definition~\ref{def:MT}.
\item SOT, strong operator topology, the paragraph following Theorem~\ref{th:UMST}.
\item ST, semitransitive, Definition~\ref{def:ST}.
\item UMST, uniformly micro-semitransitive, Definition~\ref{def:ST}.
\end{itemize}

\section{The action of the isometry group}\label{sec:act}
Let us start with the main definitions.

\begin{defi}\label{def:AT}
A normed space $X$ is almost transitive (AT) if given $x,y\in S$ and $\eps>0$ there is $T\in\Iso(X)$ such that $\|y-Tx\|\leq\eps$. If this can be achieved even for $\eps=0$ we say that $X$ is transitive.
\end{defi}

As we already mentioned we shall explore the consequences of very strong forms of transitivity based on the possibility of taking $T$ close to the identity if $y$ is close to $x$:

\begin{defi}\label{def:MT}
A normed space $X$ is microtransitive (MT) if for every $\eps>0$ there is $\delta>0$ such that for every $x,y\in S$ satisfying $\|y-x\|\le\delta$ there is $T\in\Iso(X)$ such that $y=Tx$ and $\|T-{\bf I}_X\|\le\eps$.
\end{defi}

Let $G$ be a group. A homomorphism $\pi$ from $G$ to the group of bijections of $X$ is called an {\em action} of $G$ on $X$. If $G$ and $X$ carry topologies then an action of $G$ on $X$ is continuous if the map $G\times X\longrightarrow X$ given by $(g,x)\longmapsto \pi(g)(x)$ is (jointly) continuous. A (continuous) action is said to be microtransitive if it is transitive and for each $x\in X$ the evaluation map $g\in G\longmapsto \pi(g)(x)\in X$ is open. 
Definition~\ref{def:MT} means that $\Iso(X)$ acts microtransitively on $S$ when $\Iso(X)$ is equipped with the norm topology.

The Effros microtransitivity theorem states that every continuous transitive action of a Polish (separable and completely metrizable) group on a Polish space is microtransitive; see \cite{effros,vanMill}.
We refer the reader to \cite[Chapter I, \S\,9]{kechris} for an introduction to Polish groups as well as the basic examples.

 Unfortunately, the full isometry groups of Banach spaces have a strong tendency to be nonseparable (see Proposition~\ref{prop:bad} below) and so this theorem does not provide MT. Things become much better when one considers the strong operator topology (SOT). 
We shall exploit this fact later, see Theorem~\ref{th:UMST}.

Another situation where Effros's theorem applies is when there is a {\em small} subgroup of $\Iso(X)$ that acts transitively on $S$. To be more specific, let $\mathscr I$ be an ideal of operators on $X$. Assume that $\mathscr I$ is complete under an ideal norm $\|\cdot\|_{\mathscr I}$ that dominates the operator norm. Let us denote by $\Iso_\mathscr I(X)$ the subset of isometries that can be written as $T={\bf I}_X+J$, with $J\in \mathscr I$. 
Then $\Iso_\mathscr I(X)$ is a topological group with the distance inherited from $\mathscr I$. (Note that \cite[Theorem 3.4]{FR} and Pietsch's result in \cite{pietsch} imply that if ${\bf I}_X+J$ is an isometry with respect to any equivalent norm of $X$, then either $J$ is approximable or there is $V\in\mathscr L(X)$ such that ${\bf I}_X+VJ$ has infinite-dimensional kernel.)

Some natural choices of $\mathscr J$ yield Polish groups of isometries: for instance if $X$ has separable dual and $\mathscr I$ is either the ideal of compact operators or the ideal of nuclear operators, then $\Iso_\mathscr I(X)$ is a Polish group. (See \cite[Lemma 2.6]{kania} for a nice proof of the separability of the ideal of compact operators on spaces with separable dual).

 In these circumstances, if $\Iso_\mathscr I(X)$ acts transitively on $S$ then the action is MT for the topology induced by $\|\cdot\|_{\mathscr I}$ and, therefore, $X$ is MT in the sense of Definition~\ref{def:MT}. 
Actually one can prove (and it is implicitly done along the proofs of Lemma 2.5 and Theorem 2.6 in \cite{CDKKLM}) that a separable Banach space $X$ is MT if and only if there exist a Polish subgroup of $(\Iso(X),\|\cdot\|)$ that acts transitively on $S$. 
 We will not insist on this issue. We close this section with a remark on the (bad) behaviour of the isometry groups of most classical spaces.

\begin{prop}\label{prop:bad} Let $p\in[1,\infty)$ different from $2$ and let $\mu$ be an measure. If $T$ is an isometry of $L_p(\mu)$ different from the identity, then  $\|T-{\bf I}\|>\sqrt{2}$. If $\mu$ has no atoms then $T- {\bf I}$ is an isomorphism on some infinite-dimensional subspace of $L_p(\mu)$. 
\end{prop}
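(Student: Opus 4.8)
The plan is to lean on Lamperti's description of the surjective isometries of $L_p(\mu)$ for $p\neq 2$: every such $T$ is a weighted composition operator, $Tf=w\cdot(f\circ\varphi)$, where $\varphi$ is a measure-class-preserving automorphism of the underlying measure algebra and $w$ is a weight with $|w|^p=d(\mu\circ\varphi^{-1})/d\mu$; in the real case $w=\varepsilon|w|$ with $\varepsilon$ a $\{\pm1\}$-valued measurable function. In particular $T$ preserves disjointness, $\supp(Tf)=\varphi^{-1}(\supp f)$, and $T={\bf I}$ exactly when $\varphi=\mathrm{id}$ and $\varepsilon\equiv1$. Thus $T\neq{\bf I}$ splits into two cases: (i) $\varphi=\mathrm{id}$ but $\varepsilon=-1$ on a set $C$ with $\mu(C)>0$, and (ii) $\varphi\neq\mathrm{id}$.

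For the lower bound I would first treat $1\le p<2$. In case (i), $T\mathbf{1}_C=-\mathbf{1}_C$, so $\|(T-{\bf I})\mathbf{1}_C\|=2\|\mathbf{1}_C\|$ and $\|T-{\bf I}\|\ge 2>\sqrt2$. In case (ii) the key step is to produce a set $A$ of finite positive measure with $A\cap\varphi^{-1}(A)$ null; then the unit vector $f=\mathbf{1}_A/\mu(A)^{1/p}$ has $Tf$ supported on $\varphi^{-1}(A)$, disjoint from $A=\supp f$, whence
\[
\|Tf-f\|_p^p=\|Tf\|_p^p+\|f\|_p^p=2,
\]
so $\|T-{\bf I}\|\ge 2^{1/p}>\sqrt2$ because $p<2$. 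For $p>2$ I would pass to the adjoint: $T^*$ is a surjective isometry of $L_p(\mu)^*=L_{p'}(\mu)$ with $1<p'<2$, it is $\neq{\bf I}$ since $T\neq{\bf I}$, and $\|T-{\bf I}\|=\|T^*-{\bf I}\|$, so the case already settled gives $\|T-{\bf I}\|\ge 2^{1/p'}>\sqrt2$.

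The hard part is the set-theoretic lemma behind case (ii): if $\varphi\neq\mathrm{id}$ then some positive-measure $A$ satisfies $A\cap\varphi^{-1}(A)=\emptyset$ (mod null). Realizing $\varphi$ as a point map, let $D=\{\omega:\varphi(\omega)\neq\omega\}$, so $\mu(D)>0$. Fixing a countable family $\{U_i\}$ of measurable sets separating points and putting $D_i=\{\omega\in D:\omega\in U_i,\ \varphi(\omega)\notin U_i\}$, one gets $D=\bigcup_iD_i$, and since every element of $D_i$ lies in $U_i$ while every element of $\varphi(D_i)$ lies outside $U_i$, the sets $D_i$ and $\varphi(D_i)$ are disjoint. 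Some $D_i$ has positive measure, and applying this to $\varphi^{-1}$ furnishes the desired $A$; if $\mu$ is semifinite we may shrink it to finite measure.

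For the last assertion I would exploit that in the nonatomic case the witness $A$ can be split. If $\mu$ is nonatomic, in case (ii) write $A=\bigsqcup_{n}A_n$ with $\mu(A_n)>0$; since $A\cap\varphi^{-1}(A)$ is null, the family $\{A_n\}\cup\{\varphi^{-1}(A_n)\}$ is pairwise disjoint, so the normalized indicators $f_n=\mathbf{1}_{A_n}/\mu(A_n)^{1/p}$ span an isometric copy of $\ell_p$ on which $T-{\bf I}$ acts by $\sum c_nf_n\mapsto\sum c_n(Tf_n-f_n)$ with disjointly supported images $Tf_n-f_n$ of norm $2^{1/p}$; hence $\|(T-{\bf I})y\|=2^{1/p}\|y\|$ there and $T-{\bf I}$ is an isomorphism on this infinite-dimensional subspace. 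In case (i), since $\mu$ is nonatomic and $\mu(C)>0$ the band $L_p(C)$ is infinite-dimensional and $T-{\bf I}=-2\,{\bf I}$ on it. The only points needing care are the validity of Lamperti's representation for the given (possibly non-$\sigma$-finite) $\mu$ and the point realization of $\varphi$, which I would address by the usual reduction to the semifinite, localizable setting underlying $L_p(\mu)$.
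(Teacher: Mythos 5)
Your proposal is correct in substance and follows the same overall strategy as the paper: Banach--Lamperti representation, the dichotomy ``identity set-map with a sign flip'' versus ``set-map genuinely moving some set'', disjoint supports yielding the lower bound $2^{1/p}$, duality to cover the conjugate exponent (the paper states this as $\|T-{\bf I}\|\geq\max\bigl(2^{1/p},2^{1/q}\bigr)>\sqrt{2}$, which is your $p>2$ reduction run in both directions), and bands of disjointly supported functions for the infinite-dimensional statement. The one genuine divergence is how you produce a set moved off itself, and here the paper's route is both shorter and more robust: it never leaves the measure algebra. Given $A\in\Sigma_0$ with $\mu(A\,\Delta\,\Phi(A))>0$, say $\mu(A\setminus\Phi(A))>0$, it simply takes $B=A\setminus\Phi(A)$; since $\Phi$ preserves intersections, $\Phi(B)\subseteq\Phi(A)$, which is disjoint from $B$ by construction --- no point realization needed. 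Your argument instead realizes $\varphi$ as a point map and invokes a countable family of measurable sets separating points, which is exactly the fragile step you flag at the end: such a family need not exist for a general (even strictly localizable) $\mu$ --- think of a Maharam-homogeneous space of uncountable weight --- so ``the usual reduction to the semifinite, localizable setting'' does not by itself license it; you would need a further reduction to a countably generated ($\sigma$-finite, standard) piece containing the action of $T$ on a witness function, which is more work than the one-line trick above. Everything else in your write-up (the norm computation $\|Tf-f\|_p^p=2$, the adjoint step for $p>2$, and the splitting $A=\bigsqcup_n A_n$ giving a copy of $\ell_p$ on which $T-{\bf I}$ is $2^{1/p}$ times an isometry) is sound; the paper phrases the last part even more directly by observing that $T-{\bf I}$ is an isomorphism on the whole band $L_p(B)$, which is infinite-dimensional when $\mu$ is nonatomic.
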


\begin{proof}
We may assume that $\mu$ is strictly localizable.
Let $\Sigma$ be algebra of measurable sets underlying $\mu$ in which we identify $A$ and $B$ if $\mu(A\Delta B)=0$, where $A\Delta B=(A\backslash B)\cup (B\backslash A)$ is the symmetric difference of $A$ and $B$. Put $\Sigma_0=\{A\in\Sigma: \mu(A)<\infty\}$.

By the Banach-Lamperti theorem (see \cite[Theorem 0.4]{GJK} for more details when $\mu$ fails to be $\sigma$-finite) there exists a mapping $\Phi:\Sigma_0\longrightarrow\Sigma$ (preserving finite unions and intersections) and a measurable function $h$ such that $T(1_A)=h1_{\Phi(A)}$ for every $A\in\Sigma_0$, where $1_A$ denotes the characteristic function of $A$. Now either 
\begin{itemize}
\item $\mu(A\Delta \Phi(A))=0$ for every $A \in\Sigma_0$, or
\item there exists $A\in\Sigma_0$ such that $\mu(A\Delta \Phi(A))>0$. 
\end{itemize}
In the first case we have $|h|=1$ a.e. If $h=1$ then $T={\bf I}$; otherwise there is $B\in\Sigma_0$ such that $h|_B=-1$ and so $T(1_B)=-1_B \implies \|T-{\bf I}\|\ge 2$.

In the second case we have $\mu(A\backslash \Phi(A))\cup (\Phi(A)\backslash A)>0$ and we can assume 
$\mu(A\backslash \Phi(A))>0$; the case where $\mu(\Phi(A)\backslash A)>0$ is analogous. Put $B=A\backslash \Phi(A)$. Clearly $1_B$ and $T(1_B)$ have disjoint support, hence $\|T(1_B)- 1_B\|=2^{1/p}\|1_B\| \implies \|T-{\bf I}\|\ge 2^{1/p}$.
Since $L_p(\mu)^*$ is isometric to $L_q(\mu)$, where $p^{-1}+q^{-1}=1$, and 
 $\|T-{\bf I}\|= \|T^*-{\bf I}\|$, we get $\|T-{\bf I}\|\geq \max\big(2^{1/p}, 2^{1/q}\big)> \sqrt{2}$ in all cases. The second part is clear since $T-{\bf I}$ is an isomorphism when restricted to $L_p(B)=\{f:\supp f\subset B\}$. 
\end{proof}

The preceding result implies that the isometry groups of all real Lindenstrauss spaces are discrete in the norm topology. A Lindenstrauss space is a Banach space whose dual is isometric to $L_1(\mu)$ for some measure $\mu$; see \cite{CFR} for the main examples of AT Lindenstrauss spaces.

\section{Lipschitz transitivity}\label{sec:affairs}
In this section we prove our main result on MT norms when the action of $\Iso(X)$ on $S$ is a Lipschitz quotient (that is, one has MT with $\delta>c\eps$ for some $c>0$ which corresponds to the inverse of $\Lambda$ in the following definition):

\begin{defi}\label{def:LT}
Let $\Lambda\ge 1$.
A Banach space $X$ is $\Lambda$-Lipschitz-transitive ($\Lambda$-LT) if  for every $x,y\in S$ there is $T\in \Iso(X)$ such that $y=Tx$ and $\|T-{\bf I}_X\|\leq \Lambda\|y-x\|$. We say that $X$ is Lipschitz-transitive (LT) if it is $\Lambda$-Lipschitz-transitive for some $\Lambda\ge 1$.
\end{defi}

The only known LT norms are the Euclidean ones which satisfy the condition with $\Lambda=1$. Actually this property (or its halved version) characterizes Hilbert spaces:

\subsection{An isometric characterization of Hilbert spaces by contractive automorphisms} The following result might fit better within the context of semitransitivity, which is the subject of Section~\ref{sec:ST}. However we will prove it right now because we are aware that it may be the most (only?) interesting part of the paper for some (most?) readers.

\begin{theorem}\label{th:1LT}
For a Banach space $X$ the following are equivalent:
\begin{itemize}
\item[(i)] $X$ is a Hilbert space.
\item[(ii)] $X$ is Lipschitz-transitive with constant $1$.
\item[(iii)] For each $x,y\in S$ there exists a contractive automorphism $T$ of $X$ such that $y=Tx$ and $\|T-{\bf I}_X\|=\|y-x\|$.
\item[(iv)]
There is a contractive mapping $K:S\longrightarrow S^*$  such that $\langle K(x),x\rangle=1$ for every $x\in S$.
\end{itemize}
\end{theorem}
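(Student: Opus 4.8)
The plan is to establish the cycle (i)$\Rightarrow$(ii)$\Rightarrow$(iii)$\Rightarrow$(iv)$\Rightarrow$(i), with the last arc carrying all the weight. The first three are essentially routine. For (i)$\Rightarrow$(ii), given $x,y\in S$ I take the rotation by the angle $\theta$ between them inside $\Span\{x,y\}$, extended by the identity on the orthogonal complement; a direct computation gives $\|T-{\bf I}\|=2\sin(\theta/2)=\|x-y\|$, so $X$ is even $1$-LT with equality. For (ii)$\Rightarrow$(iii) one only notes that an isometry is a contractive automorphism and that $\|T-{\bf I}\|\ge\|(T-{\bf I})x\|=\|y-x\|$ for \emph{every} $T$ with $Tx=y$, so the bound in (ii) is forced to be an equality. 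For (iii)$\Rightarrow$(iv) I transport the duality map: a contractive automorphism carries norming functionals to norming functionals, and with this one checks that the norm is smooth, so $J\colon S\to S^*$ is well defined; if $Tx=y$ realises $\|T-{\bf I}\|=\|x-y\|$, then $T^*J(y)$ norms $x$ and has norm $\le1$, hence $T^*J(y)=J(x)$, whence $\|J(x)-J(y)\|=\|(T^*-{\bf I})J(y)\|\le\|x-y\|$. Thus $K:=J$ witnesses (iv). (The smoothness point is the only subtlety here, and I would verify it separately.)

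The substance is (iv)$\Rightarrow$(i). First I normalise: replacing $K(x)$ by $\tfrac12(K(x)-K(-x))$ I may assume $K$ is odd, which preserves contractivity, the identity $\langle K(x),x\rangle=1$, and the range in $S^*$. Then, for $x,y\in S$, I expand $\langle K(x)-K(y),x-y\rangle$, use $\langle K(x),x\rangle=\langle K(y),y\rangle=1$, and bound it by $\|K(x)-K(y)\|\,\|x-y\|\le\|x-y\|^2$, obtaining
\[
2-\langle K(x),y\rangle-\langle K(y),x\rangle\le\|x-y\|^2 .
\]
Feeding the pair $(x,-y)$ into the same inequality and invoking oddness gives the companion bound $2+\langle K(x),y\rangle+\langle K(y),x\rangle\le\|x+y\|^2$, and adding the two yields
\[
\|x+y\|^2+\|x-y\|^2\ge 2\|x\|^2+2\|y\|^2\qquad(\|x\|=\|y\|),
\]
one half of the parallelogram law, so far only for vectors of equal norm.

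The hard part will be upgrading this to the parallelogram \emph{identity}. What I have is the convexity half, and for equal--norm vectors only, which is strictly weaker than the law; the first--order contractivity of $K$ must be exploited further to produce the reverse, smoothness--side inequality. The cleanest finish would be to promote the displayed inequality to \emph{all} pairs $x,y$, for then the substitution $u=x+y$, $v=x-y$ turns it into $\|u+v\|^2+\|u-v\|^2\le2\|u\|^2+2\|v\|^2$; both inequalities then hold, the parallelogram law follows, and $X$ is Hilbert by Jordan--von Neumann. A natural mechanism, which I would try to justify, is that $K$ is in fact a \emph{surjective isometry} of $S$ onto $S^*$: its inverse would then be a contractive normalised map on $S^*$, so that $X^*$ satisfies (iv) as well and the equal--norm inequality for $X^*$ dualises to the missing inequality for $X$. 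Proving that $K$ is onto and norm--preserving, rather than merely contractive, is the genuinely global input that separates (iv) from the weak, purely local requirement that the unit sphere have curvature at most $1$ (which elongated smooth bodies also satisfy), and I expect it to be the crux of Ivanov's argument.
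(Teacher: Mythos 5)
Your arcs (i)$\Rightarrow$(ii)$\Rightarrow$(iii)$\Rightarrow$(iv) are correct and essentially the paper's: the transport identity $T^*J(y)=J(x)$ and the estimate $\|J(x)-J(y)\|\le\|T-{\bf I}_X\|$ are exactly Lemma~\ref{lem:Lip(J)}, and the smoothness point you flag is indeed handled in the paper by citation (\cite[Corollary 2.13]{CDKKLM} or \cite[Proposition 2.6]{BRP}). But the load-bearing implication (iv)$\Rightarrow$(i) is not proved in your proposal, and you say so yourself. What you actually establish is the one-sided, equal-norm inequality $\|x+y\|^2+\|x-y\|^2\ge 4$ for $x,y\in S$ (a correct and non-vacuous consequence: it already fails in $\ell_p^2$ for $p\ne 2$, e.g.\ at the pair $e_1,e_2$ when $p>2$), and then you defer ``the hard part'' to a mechanism --- that $K$ should be a surjective isometry of $S$ onto $S^*$, so that (iv) dualizes --- which you neither prove nor reduce to anything simpler. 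Nothing in (iv) gives surjectivity of $K$ directly; establishing that $K$ is an onto isometry is essentially equivalent to the conclusion $K=J$ with $X$ Hilbert, so the proposal stops exactly where the theorem begins. (A salvage might exist: there are classical one-sided parallelogram characterizations on the unit sphere, in the spirit of Day, which if quoted would close your route; but you neither cite nor prove such a result, so as written there is a genuine gap.)

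The paper's proof of (iv)$\Rightarrow$(i) (Ivanov's) runs along entirely different lines and is worth contrasting with your plan. One first reduces to $\dim X=2$ by composing $S_Y\hookrightarrow S\overset{K}{\to}S^*\to Y^*$ for each subspace $Y$, notes that $X$ is then smooth so $K=J$, and takes the John ellipse $E$ (maximal area) inside $B_X$, with Euclidean norm $|\cdot|$ and the induced isometry $I:(X,|\cdot|)\to(X^*,|\cdot|^*)$. The contact set $\Sigma=\{x:\|x\|=|x|=1\}$ contains at least two pairs of opposite points, and $J(x)=I(x)$ precisely on rays through $\Sigma$. The one genuinely global step is then a single chain of inequalities: for linearly independent $x,y\in\Sigma$, contractivity of $J$ applied to the pair $(x,-y)$ gives $\|x+y\|\ge\|J(x)+J(y)\|^*=\|I(x+y)\|^*\ge |x+y|\ge\|x+y\|$, forcing equality throughout, so the normalized bisector $(x+y)/|x+y|$ again lies in $\Sigma$; since $\Sigma$ is closed, iterated bisection shows $\Sigma$ is all of $\partial E$, i.e.\ $\|\cdot\|=|\cdot|$. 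So where you tried to extract a two-sided parallelogram law from first-order contractivity, the paper instead exploits the rigidity of the John ellipse: contractivity of $J$ propagates the finitely many contact points densely around the sphere. That propagation argument is the missing idea in your proposal.
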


The implication (i) $\implies$ (ii) is obvious in the real case, but not so for complex Hilbert spaces (see \cite[Lemma 2.2]{acosta} for the proof). 

The implication (ii) $\implies$ (iii) is trivial.
(iii) $\implies$ (iv) is consequence of the following remark and the fact that (iii) implies that $X$ is smooth (\cite[Corollary 2.13]{CDKKLM} or  \cite[Proposition 2.6]{BRP}).

\begin{lemma}\label{lem:Lip(J)}
Let $x,y\in S$ and let $T\in\Aut_1(X)$ be such that $y=Tx$. If the norm is smooth at $x$ then so is at $y$ and $\|J(x)-J(y)\|\leq \|T-{\bf I}_X\|$.
\end{lemma}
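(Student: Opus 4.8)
The plan is to exploit the adjoint $T^*$. Let $g\in S^*$ be any support functional at $y$, that is $\langle g,y\rangle=1$ (at least one exists by Hahn--Banach). The crucial observation is that $T^*g$ is automatically a support functional at $x$: since $y=Tx$ we have $\langle T^*g,x\rangle=\langle g,Tx\rangle=\langle g,y\rangle=1$, whereas $\|T^*g\|\le\|T^*\|\,\|g\|=\|T\|\le 1$ because $T$ is contractive. Combining the two yields $\|T^*g\|=1$, so $T^*g\in S^*$ and it attains its norm at $x$.

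Next I would transfer smoothness from $x$ to $y$. Smoothness at $x$ says the support functional at $x$ is unique and equals $J(x)$, so \emph{every} choice of $g$ above must satisfy $T^*g=J(x)$. Since $T$ is a contractive automorphism, its adjoint $T^*$ is injective (indeed invertible), whence $g$ is uniquely determined. This simultaneously proves that the norm is smooth at $y$ and that $J(y)=(T^*)^{-1}J(x)$, i.e.
$$T^*J(y)=J(x).$$

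The estimate then drops out of this identity. Writing $J(x)-J(y)=T^*J(y)-J(y)=(T-{\bf I}_X)^*J(y)$ and using $\|(T-{\bf I}_X)^*\|=\|T-{\bf I}_X\|$ together with $\|J(y)\|=1$, I obtain
$$\|J(x)-J(y)\|=\|(T-{\bf I}_X)^*J(y)\|\le\|T-{\bf I}_X\|,$$
as required.

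The only genuinely delicate point is the \emph{propagation of smoothness}, namely establishing uniqueness of the support functional at $y$ rather than merely producing the inequality. This is exactly where the hypothesis that $T$ be an isomorphism (so that $T^*$ is one-to-one) is indispensable; without it, one could still push a support functional of $y$ to $J(x)$, but could not conclude that $y$ has only one. Once smoothness at $y$ and the identity $T^*J(y)=J(x)$ are in hand, the remainder is a two-line computation with the adjoint.
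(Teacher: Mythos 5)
Your proof is correct and follows essentially the same route as the paper's: push a support functional $g$ at $y$ back via $T^*$ to a support functional at $x$, invoke smoothness at $x$ to get $T^*g=J(x)$, use invertibility of $T^*$ to deduce uniqueness of $g$ (hence smoothness at $y$ and $T^*J(y)=J(x)$), and conclude with $\|J(x)-J(y)\|=\|(T-{\bf I}_X)^*J(y)\|\le\|T-{\bf I}_X\|$. The only difference is one of exposition: the paper compresses the propagation-of-smoothness step into a single ``it follows,'' whereas you spell out exactly the details (norm-one of $T^*g$, injectivity of $T^*$) that make it work.
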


\begin{proof}
Given 
 $y^*\in S^*$ 
  one has
$
\langle y^* , Tx\rangle = \langle T^*y^* , x\rangle.
$ 
It follows that the norm is smooth at $y$ and  $T^*J(y)=J(x)$. Since $\|T^*-{\bf I}_{X^*}\|= \|T-{\bf I}_{X}\|$ we have
$$
\|J(x)-J(y)\|=\|T^*J(y)-J(y)\|\leq  \|T^*-{\bf I}_{X^*}\|\|J(y)\|= \|T-{\bf I}_{X}\|.\qedhere
$$
\end{proof}
Thus (iii) implies that (iv) holds when $K=J$ is the duality map. The lion's share of the proof of Theorem~\ref{th:1LT} is the implication (iv) $\implies$ (i) whose gorgeous proof, due to Serguei Ivanov, is included here with his kind permission.

The hypotheses imply that for every subspace $Y\subset X$ the composition
$$
\xymatrixcolsep{5.5pc}
\xymatrix{
S_Y\ar[r]^{\text{inclusion}} & S \ar[r]^K & S^* \ar[r]^{\text{restriction to $Y$}} & {Y^*}
}
$$ 
takes values in the sphere of $Y^*$ and has the same properties as $K$. Thus, it suffices to consider the case where $\dim X=2$. 
Elementary considerations show that $X$ is smooth and thus $K=J$ is the duality map.  
Let $E$ be the ellipse of maximal area (John ellipsoid) enclosed by $S$ and let $|\cdot|$ be the corresponding Euclidean norm on $X$.
Let $|\cdot|^*$ be the dual norm on $X^*$
and let 
 $I:(X,|\cdot|)\longrightarrow (X^*,|\cdot|^*)$ be the canonical isometry implemented by the inner product of $X$ so that $\langle I(x), y\rangle =\langle y| x\rangle$ for $x,y\in X$. Clearly,
 $$
 \|x\|\leq |x| =|I(x)|^*\leq\|I(x)\|^*,
 $$
where $\|\cdot\|^*$ is the norm of $X^*$ (that is, $\|f\|^*=\sup\{|\langle f,x\rangle|: \|x\|\le 1\}$.)
Let $\Sigma=\{x\in X: \|x\|=|x|=1\}$. It is clear that $\Sigma$ contains at least two pairs of opposite points and that $J(x)=I(x)\iff
\|I(x)\|^*=\|x\|\iff x$ is proportional to an element of $\Sigma$. 

Assume $x,y\in\Sigma$ are linearly independent and look at the distance between $x$ and $-y$. One has
$$
\|x+y\|\geq\|J(x)+J(y)\|^*=\|Ix+Iy\|^*=\|I(x+y)\|^*\geq \|x+y\|.
$$
Hence the normalized bisector $(x+y)/|x+y|$ belongs to $\Sigma$.
Since $\Sigma$ is closed, it follows that $\Sigma$ agrees with the boundary of $E$.  It's nice, isn't it?

\subsection{Lipschitz-transitive norms}\label{sec:anal}
Back into Mazur's fold we now study $\Lambda$-LT norms for arbitrary $\Lambda$. The main result in this regard is the following.

\begin{theorem}\label{th:anal}
Lipschitz-transitive norms are analytic.
\end{theorem}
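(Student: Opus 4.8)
The plan is to prove that $\phi:=\tfrac12\|\cdot\|^2$ is real-analytic on $X\setminus\{0\}$, which is equivalent to analyticity of the norm since the norm equals $\sqrt{2\phi}$ there. I would build on two facts. First, a $\Lambda$-LT norm is smooth by \cite[Corollary 2.13]{CDKKLM}, so the duality map $J=\nabla\phi$ is defined, and Lemma~\ref{lem:Lip(J)}, applied to the isometries supplied by Definition~\ref{def:LT}, gives $\|J(x)-J(y)\|\le\Lambda\|x-y\|$ on $S$; thus $\phi$ is $C^{1,1}$. Second, and decisively, every $T\in\Iso(X)$ satisfies $\phi\circ T=\phi$, so pointwise differentiability of $\phi$ is invariant under the transitive action: if $\phi$ is $n$-times differentiable at some $x_0\in S$ and $Tx_0=y$, then writing $\phi=\phi\circ T^{-1}$ shows $\phi$ is $n$-times differentiable at $y$ with $D^n\phi(y)(h_1,\dots,h_n)=D^n\phi(x_0)(T^{-1}h_1,\dots,T^{-1}h_n)$. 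As $T^{\pm1}$ are isometries, this conjugation preserves the norm of the form, so $\|D^n\phi\|$ is constant on $S$, say $c_n$; by $2$-homogeneity $\phi$ is then $n$-times differentiable on all of $X\setminus\{0\}$ as soon as it is at one point of $S$.

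Granting for the moment that $\phi$ is $C^\infty$ off the origin, the factorial growth needed for analyticity would come almost for free from this transport identity. Fixing $x_0\in S$ and choosing, for $y\in S$ near $x_0$, an isometry $T_y$ with $T_yx_0=y$ and $\|T_y-\mathbf I_X\|\le\Lambda\|y-x_0\|$, the identity above together with the elementary multilinear estimate yields $\|D^n\phi(y)-D^n\phi(x_0)\|\le n\,c_n\,\|T_y^{-1}-\mathbf I_X\|=n\,c_n\,\|T_y-\mathbf I_X\|\le\Lambda n\,c_n\,\|y-x_0\|$. Accounting also for the explicit radial direction through $2$-homogeneity, $D^n\phi$ is Lipschitz near $x_0$ with constant $C'n\,c_n$ for some $C'=C'(\Lambda)$, so that $c_{n+1}\le C'n\,c_n$ and hence $c_n\le C\rho^{-n}n!$ for suitable $C,\rho>0$. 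Combining this with $\|D^n\phi(x)\|=\|x\|^{2-n}c_n$ and Taylor's formula with Lagrange remainder, the Taylor series of $\phi$ at any point would converge to $\phi$ on a ball of radius comparable to $\rho$, giving analyticity.

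The substance of the proof is therefore to show that $\phi$ is $C^\infty$, and here the transport principle would help again: it reduces the existence of $D^{n+1}\phi$ everywhere to its existence at a single point, i.e.\ to differentiating the Lipschitz map $D^n\phi\colon S\to\mathscr L(^nX)$ once. For $n=1$ I would note that the Lipschitz gradient makes the norm uniformly smooth of power type $2$, so $X$ is superreflexive and $X^*$ has the Radon--Nikod\'ym property; after restricting to a separable subspace, the differentiability of Lipschitz maps into RNP spaces (\cite[Chapter 6]{BL}) produces a point where $J=\nabla\phi$ is differentiable, hence $D^2\phi$ there and, by transport, everywhere. For the inductive step I would differentiate $D^n\phi$ into $\mathscr L(^nX)=(X\widehat{\otimes}\cdots\widehat{\otimes}X)^*$; as the target is a dual space, weak$^*$ G\^ateaux differentiability of Lipschitz maps into duals (\cite[Chapter 6]{BL}) delivers a bounded $(n+1)$-linear form at one point, again transported to all of $S$.

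The hard part will be exactly this differentiability step. Class $C^{k,1}$ never upgrades to $C^{k+1}$ on its own, there is no Rademacher theorem in infinite dimensions, and the natural targets $\mathscr L(^nX)$ of the higher derivatives are duals of projective tensor products---typically nonreflexive and lacking the RNP---so one is forced to pass to weak$^*$ derivatives and to separable reductions that must be reconciled with the action of the possibly nonseparable isometry group. What makes the scheme viable is the transport principle of the first paragraph, which turns every ``differentiable almost everywhere'' statement into the single point of existence that the induction actually consumes.
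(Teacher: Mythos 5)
Your scheme is, in substance, the paper's own proof. Your ``transport principle'' is the paper's equivariance identities (property (b) in the proof of Lemma~\ref{lem:Coo}); your estimate $\|D^n\phi(y)-D^n\phi(x_0)\|\le \Lambda\, n\, c_n\,\|y-x_0\|$ is property (c); differentiating the Lipschitz map $D^n\phi$ into the dual $\mathscr L(^nX)=(X\widehat{\otimes}\cdots\widehat{\otimes}X)^*$ at one point and propagating by transitivity and homogeneity is exactly the paper's use of ($\dag$) together with the two commutative diagrams; and your recursion $c_{n+1}\le C'n\,c_n$ plus the remainder bound is the paper's $M_{n+1}\le 3n\Lambda M_n$ and $|r_n(h)|\le (3\Lambda)^n\|h\|^{n+1}$. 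Working with $\phi=\tfrac12\|\cdot\|^2$ instead of the norm, and using RNP-valued differentiation at the first step (the paper treats $n=1$ by the same weak* route as all other $n$, which is available since the predual is separable), are immaterial variants.

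Two points remain open as written, and one of them is a genuine gap: the separable reduction. You rightly note that the differentiation theorems require a separable domain while $\Iso(X)$ may be nonseparable, but the transport principle does \emph{not} reconcile them: a point of (weak*) differentiability of $D^n\phi$ restricted to a separable subspace $Y\subset X$ yields only a multilinear form on $Y$, and the isometries of $X$ do not preserve $Y$, so there is nothing for the group to transport. What actually closes this --- and what the paper invokes at the outset --- is that analyticity is separably determined together with the fact, extracted from the proof of \cite[Theorem~2.6]{CDKKLM}, that every separable subspace of an LT space is contained in a \emph{separable LT subspace with the same constant}; after that reduction the whole induction runs inside a separable LT space whose own isometry group acts transitively on its sphere, and your argument goes through verbatim. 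The lesser point: you never upgrade the weak* G\^ateaux derivatives to Fr\'echet derivatives, which the paper does via Aronszajn's criterion ($\ddag$), its hypotheses (norm continuity and uniform boundedness of $x\mapsto d_xd^nf$ on annuli) being supplied precisely by your transport estimates and Lemma~\ref{lem:k-homog}. In your final step this can be finessed --- evaluation at the fixed tensor $h\otimes\cdots\otimes h$ is weak*-continuous, so $t\mapsto \phi(x+th)$ has the one-dimensional derivatives that the Lagrange remainder consumes --- but as written your assertion that $\phi$ is $C^\infty$ in the Fr\'echet sense is not established without ($\ddag$), so you should either invoke it or restate the induction purely in terms of iterated weak* derivatives.
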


The proof requires some preparation.
We first remark that most (all?) differentiability properties are separably determined in the sense that a function is differentiable or analytic if and only if its restriction to any separable subspace is.
On the other hand, the proof of \cite[Theorem~2.6]{CDKKLM} shows that if $X$ is LT then for every separable subspace $Y\subset X$ there is another separable $Z\subset X$ which is LT (with the same constant as $X$) and contains $Y$. Thus, if suffices to prove the theorem for separable Banach spaces.
We begin with the following observation. 
Recall that  $f:X\longrightarrow Y$ is said to be $\alpha$-homogeneous (or homogenenous of degree $\alpha$) if $f(\lambda x)=\lambda^\alpha f(x)$ for every $x\in X$ and $\lambda\ge 0$. Here $\alpha$ can be any real number.

\begin{lemma}\label{lem:k-homog}
Let $f:X\longrightarrow Y$ be an $\alpha$-homogeneous map acting between normed spaces. If the restriction of $f$ to $S_X$ is Lipschitz, then $f$ is Lipschitz on every annulus $\{r\leq \|x\|	\leq R\}$, with $0<r<R<\infty $. If $\alpha=1$ then $f$ is globally Lipschitz.
\end{lemma}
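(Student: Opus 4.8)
The plan is to reduce everything to the behaviour of $f$ on the sphere by means of the homogeneity relation $f(\lambda x)=\lambda^\alpha f(x)$, combined with the elementary fact that radial projection onto $S_X$ is Lipschitz away from the origin. Writing $\hat x=x/\|x\|$ for $x\neq0$, I would first record the geometric estimate
\[
\big\|\hat x-\hat y\big\|\le \frac{2}{\max(\|x\|,\|y\|)}\,\|x-y\|,
\]
obtained by adding and subtracting $y/\|x\|$ and using $\big|\,\|x\|-\|y\|\,\big|\le\|x-y\|$. I would also note that $f$ is bounded on $S_X$: fixing any $s_0\in S_X$, the Lipschitz bound on the sphere gives $\|f(s)\|\le\|f(s_0)\|+2L$ for every $s\in S_X$, where $L$ is the Lipschitz constant of $f|_{S_X}$; call this bound $C$.

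For the annulus statement, given $x,y$ with $r\le\|x\|,\|y\|\le R$, I would use the decomposition
\[
f(x)-f(y)=\|x\|^\alpha\big(f(\hat x)-f(\hat y)\big)+\big(\|x\|^\alpha-\|y\|^\alpha\big)f(\hat y).
\]
The first term is controlled by $\|x\|^\alpha\cdot L\|\hat x-\hat y\|\le \max(r^\alpha,R^\alpha)\,L\,\tfrac{2}{r}\,\|x-y\|$, using the geometric estimate and $\max(\|x\|,\|y\|)\ge r$. The second term is at most $\big|\,\|x\|^\alpha-\|y\|^\alpha\,\big|\cdot C$, and since $t\mapsto t^\alpha$ is $C^1$, hence Lipschitz on the compact interval $[r,R]$ with constant $|\alpha|\max(r^{\alpha-1},R^{\alpha-1})$, this is $\le|\alpha|\max(r^{\alpha-1},R^{\alpha-1})\,C\,\|x-y\|$. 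Adding the two bounds yields a Lipschitz constant on the annulus depending only on $L,C,\alpha,r,R$.

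For the case $\alpha=1$ I would obtain a global bound, exploiting that the scale factor now cancels. The key improvement is $\|x\|\,\|\hat x-\hat y\|\le2\|x-y\|$, which follows from the geometric estimate because $\|x\|\le\max(\|x\|,\|y\|)$; this removes the dependence on $r$. Together with the global Lipschitz bound $\big|\,\|x\|-\|y\|\,\big|\le\|x-y\|$ (constant $1$ for $t\mapsto t$ on all of $[0,\infty)$) and boundedness of $f$ on $S_X$, the same decomposition gives $\|f(x)-f(y)\|\le(2L+C)\|x-y\|$ for all nonzero $x,y$. Finally $f(0)=0$ by homogeneity, and $\|f(x)-f(0)\|=\|x\|\,\|f(\hat x)\|\le C\|x\|$, so the estimate extends to the origin and $f$ is globally Lipschitz.

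I do not expect a genuine obstacle here; the one point requiring care is that the Lipschitz constant of $t\mapsto t^\alpha$ degenerates as $r\to0$ when $\alpha<1$, which is precisely why the annulus must stay bounded away from the origin and why only the exponent $\alpha=1$ yields a global statement.
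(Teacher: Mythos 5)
Your proof is correct and takes essentially the same route as the paper: your decomposition $f(x)-f(y)=\|x\|^\alpha\big(f(\hat x)-f(\hat y)\big)+\big(\|x\|^\alpha-\|y\|^\alpha\big)f(\hat y)$ is, upon setting $z=\|x\|\,\hat y=(\|x\|/\|y\|)\,y$, exactly the paper's passage through the intermediate point $z$ of the same norm as $x$ and proportional to $y$, with the same radial estimate (your bound $\|\hat x-\hat y\|\le 2\|x-y\|/\max(\|x\|,\|y\|)$ is the paper's $\|x-z\|\le 2(\|x\|/\|y\|)\|x-y\|$) and the same mean-value bound on $t\mapsto t^\alpha$. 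The only harmless refinements are your explicit $|\alpha|$ covering negative exponents and the explicit treatment of the origin in the $\alpha=1$ case.
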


\begin{proof}
Although the statement is almost trivial we write down a complete proof because we need a more precise estimate to complete the proof of Theorem~\ref{th:anal}. Let $L$ be the Lipschitz constant of $f$ on $S_X$ and $M=\sup_{\|x\|=1}\|f(x)\|$. Then if $\|x\|=\|y\|=r$ one has
$$
\|f(x)-f(y)\|\leq r^{\alpha-1}\|x-y\|
$$ 
(obvious)
and $\|f(x)\|\leq M \|x\|^\alpha$ (even more obvious).
For arbitrary $x,y\neq 0$ put $z=\big(\|x\|/\|y\|\big) y$, so that $z$ has the same norm as $x$ and is proportional to $y$. Then 
\begin{equation*}
\|x-z\|\leq 2\frac{\|x\|}{\|y\|}\|x-y\|.
\end{equation*}
(Just estimate $\big\|\|y\|x- \|x\| y\big\|$.) We have $\|f(x)-f(y)\|\leq \|f(x)-f(z)\|+ \|f(z)-f(y)\|$. Also,
\begin{align}\label{eq:using1}
\|f(x)&-f(z)\|\leq \|x\|^{\alpha-1}L \|x-z\|\leq 2L\frac{\|x\|^\alpha}{\|y\|}\|x-y\|;\\ \label{eq:using2}
\|f(z)&-f(y)\|=\left\|\left(\frac{\|x\|}{\|y\|}\right)^\alpha f(y)-f(y)\right\|= \frac{\big|\|x\|^\alpha- \|y\|^\alpha \big|}{\|y\|^\alpha}\|f(y)\|\\\nonumber
&\leq M \big|\|x\|^\alpha- \|y\|^\alpha \big|\leq M \alpha\max\big(\|x\|^{\alpha-1},\|y\|^{\alpha-1}\big)\|x-y\|,
\end{align}
by the mean value theorem. Finally, if $0<r\le\|x\|,\|y\|\le R$ we get
$$
\|f(x)-f(y)\|\leq \left(\frac{2L\max(r^\alpha,R^\alpha)}{r}+
 M \alpha\max\big(r^{\alpha-1},R^{\alpha-1}\big)\right)\|x-y\|,
$$
which proves the first part. If $\alpha=1$, one obtains $\|f(x)-f(z)\|\le 2L\|x-y\|$ and $\|f(z)-f(y)\|\leq M\|x-y\|$  so, certainly, $f$ is Lipschitz.
\end{proof}

We now quote the basic differentiation stuff we need in the form we need:
\smallskip

\noindent$(\dag)\;$Let $X, Y$ be  separable Banach spaces and $g:U\longrightarrow Y^*$ a Lipschitz function, where $U$ is an open subset of $X$. Then $g$ is weak* differentiable on a dense subset of $U$. (See Aronszajn \cite[\S~2, Theorem~1, p. 164]{arons} or \cite[Corollary 6.44]{BL} for a stronger result; in this form the result follows from Mankiewicz \cite[Theorem 4.5]{mank})
\smallskip 

Here, we say that $g$ is weak* differentiable at $x$ if for every $z\in X$ the limit
$$
d_xg(z)=\lim_{t\to 0}\frac{g(x+tz)-g(x)}{t}
$$
exists in the weak* topology of $Y^*$ and is a bounded operator in $z$. This is called a weak*-Gateaux differential in \cite{BL} and indeed it is the Gateaux differential when one considers the norm topology on $X$ and the weak* topology on $Y^*$.
\smallskip

\noindent$(\ddag)\;$Let $X, Y$ be Banach spaces with $X$ separable and let $g:U\longrightarrow Y^*$ a function, where $U$ is an open subset of $X$.
If $g$ is norm-to-weak* continuous, uniformly bounded and weak* differentiable on $U$ and the map $x\in U\longmapsto d_xg\in\mathscr L(X,Y^*)$ is norm continuous and uniformly bounded, then $d_x g$ is a differential in the Fr\'echet sense; Aronszajn \cite[\S~1, Theorem~3, p. 162]{arons}.
\smallskip 

The first portion of the proof of Theorem~\ref{th:anal} appears now: 

\begin{lemma}\label{lem:Coo}
Lipschitz-transitive norms on separable Banach spaces are $C^\infty$-smooth.
\end{lemma}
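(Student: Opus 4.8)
The plan is to work throughout with the function $\phi=\tfrac12\|\cdot\|^2$ and its gradient, the duality map $J=\nabla\phi$, and to prove by induction on $k$ that the derivative $D^k\phi$ exists off the origin and is locally Lipschitz; the case $k=\infty$ then gives the statement since the norm is $\sqrt{2\phi}$. Since $\Lambda$-LT forces $X$ to be smooth (by \cite[Corollary 2.13]{CDKKLM} or \cite[Proposition 2.6]{BRP}, exactly as invoked for (iii) in Theorem~\ref{th:1LT}), the map $J:X\to X^*$ is defined everywhere. The first input is that $J$ is globally Lipschitz: combining Lemma~\ref{lem:Lip(J)} with the LT hypothesis gives $\|J(x)-J(y)\|\le\|T-{\bf I}_X\|\le\Lambda\|x-y\|$ for $x,y\in S$, and since $J$ is $1$-homogeneous, Lemma~\ref{lem:k-homog} upgrades this to a global Lipschitz bound, so $\|d_xJ\|$ is uniformly bounded wherever the differential exists.

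The engine of the argument is equivariance. Each $T\in\Iso(X)$ preserves $\phi$, so $J(Tx)=(T^{-1})^*J(x)$ and, after differentiating, $D^k\phi(Tx)=D^k\phi(x)\circ(T^{-1})^{\times k}$. I would first record that if $J$ is weak* differentiable at $x$ then it is so at $Tx$ with $d_{Tx}J=(T^{-1})^*\circ d_xJ\circ T^{-1}$; this follows by writing $Tx+tz=T(x+tT^{-1}z)$ and using that $(T^{-1})^*$ is weak* continuous. Now fact $(\dag)$ (Aronszajn--Mankiewicz) gives weak* differentiability of $J$ on a dense set, which is a cone by homogeneity and hence meets $S$; transitivity of the action of $\Iso(X)$ then transports differentiability to every point of $S$, and homogeneity spreads it over $X\setminus\{0\}$. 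The same equivariance yields local Lipschitz control of $x\mapsto d_xJ$: for $y=Tx$ with $\|T-{\bf I}_X\|\le\Lambda\|y-x\|$ one has $\|(T^{-1})^*-{\bf I}_{X^*}\|=\|T-{\bf I}_X\|$, whence
$$
\|d_yJ-d_xJ\|=\big\|(T^{-1})^*\,d_xJ\,T^{-1}-d_xJ\big\|\le 2\,\|d_xJ\|\,\|T-{\bf I}_X\|\le 2\Lambda\,\|d_xJ\|\,\|y-x\|.
$$
With $J$ norm-to-weak* continuous and uniformly bounded and $x\mapsto d_xJ$ norm continuous and bounded, fact $(\ddag)$ promotes $d_xJ$ to a genuine Fr\'echet differential, so $\phi\in C^2$ off the origin.

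The induction repeats this scheme one derivative at a time. Assuming $D^k\phi:X\setminus\{0\}\to\mathscr L(^kX)$ exists, is equivariant and is locally Lipschitz, I identify $\mathscr L(^kX)$ with the dual of the separable space $X\widehat{\otimes}\cdots\widehat{\otimes}X$ and apply $(\dag)$ to obtain weak* differentiability of $D^k\phi$ on a dense cone; transitivity transports it everywhere, producing $D^{k+1}\phi$. The representation $T\mapsto\big(a\mapsto a\circ(T^{-1})^{\times(k+1)}\big)$ on $\mathscr L(^{k+1}X)$ is an adjoint, hence acts by weak* continuous operators (so the transport is legitimate) and satisfies $\|T\cdot a-a\|\le C(k)\,\|T-{\bf I}_X\|\,\|a\|$ for $\|T-{\bf I}_X\|\le 1$; the equivariance of $D^{k+1}\phi$ together with this estimate forces local Lipschitzness, and $(\ddag)$ (applied on annuli) makes $D^{k+1}\phi$ a true differential. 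Thus $\phi$, and hence $\|\cdot\|$, is $C^\infty$ off the origin. The main obstacle is the bookkeeping in the inductive step: one must keep each $D^k\phi$ inside a separable dual so that $(\dag)$ applies, verify that the group representation on $\mathscr L(^kX)$ acts by weak* continuous operators with the multilinear Lipschitz bound above, and re-check the uniform boundedness and continuity hypotheses of $(\ddag)$ at every stage. The transitivity-plus-equivariance transport is precisely what replaces any attempt to differentiate $J$ directly.
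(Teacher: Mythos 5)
Your proposal is correct and follows essentially the same route as the paper's proof: an induction on the order of differentiation combining the equivariance identities under $\Iso(X)$, Aronszajn--Mankiewicz weak* differentiability $(\dag)$ on the separable duals $\mathscr L(^k X)$, transitivity plus homogeneity to transport differentiability to all of $X\setminus\{0\}$, the LT hypothesis to get the Lipschitz estimates (globalized via Lemma~\ref{lem:k-homog}), and $(\ddag)$ to upgrade weak* differentials to Fr\'echet ones. Your only deviations are cosmetic: you differentiate $\tfrac12\|\cdot\|^2$ rather than the norm itself, and you start the induction at $k=1$ with the duality map via Lemma~\ref{lem:Lip(J)} --- an option the paper itself explicitly offers.
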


\begin{proof}
Note that a function which is $C^k$ in the G\^ateaux sense is $C^k$ in the Fr\'echet sense too; see \cite[Proposition 4.2]{BL}. During the proof we consider $\mathscr L(^n X)$ as the dual of the $n$-fold tensor $X\widehat{\otimes}\cdots \widehat{\otimes}X$, mostly for notational reasons.
Let $X$ be separable and LT with constant $\Lambda$. We will prove that the function $f(x)=\|x\|$ is $C^\infty$ on $X\backslash \{0\}$ by showing that, for each $k\geq 0$, it has the following properties:
\begin{itemize}
\item[(a)] $f$ is $k$-times Fr\'echet differentiable on $X\backslash \{0\}$.
\item[(b)] For each nonzero $x\in X$, every $\lambda>0$ and every $T\in\Iso(X)$ one has
$
d_x^k f= (d_{Tx}^kf)(T\otimes\cdots\otimes T)$ and 
$d_x^k f= \lambda^{k-1} d_{\lambda x}^k f.
$
In particular the norm $\|d_x^k f\|$ is the same for all $x\in S_X$ and if we denote it by $M_k$, then  $ \|d_z^k f\|=M_k/\|z\|^{k-1}$ for every nonzero $z\in X$.
\item[(c)]  The map $d^kf: S_X \longrightarrow \mathscr L(^k X)$ is bounded and, given $x,y\in S_X$ one has $\|d_y^{k}f- d_x^{k}f\|\leq k M_k \Lambda\|x-y\|$, where $M_k$ is as in (b). 
\end{itemize}
When $k\geq 1$ and $d_x^kf$ is interpreted as a $k$-linear form on $X$ the first equality in (b) means  $d_x^k f(y_1,\dots,y_k)= (d_{Tx}^kf)(Ty_1,\dots,Ty_k)$ for $y_i\in X$, while the second one means that $d^k_xf$ is  homogenenous of degree $1-k$ in $x$.

The proof is by induction on $k$.
The case $k=0$ is trivial, although it is more entertaining than one might expect:
The 0-fold tensor product is the ground field and
$d^0_xf$ is the constant $\|x\|$ acting by multiplication on $\R$, while the ``0-fold tensor power'' of $T$ is ${\bf I}_\R$. 
The identities in (b) say that isometries preserve the norm (!) and that the norm is homogeneous (!!), respectively. 
Finally (c) means that the norm is constant on the unit sphere (!!!).
Some readers may prefer to start the induction with $k=1$ (using Lemma~\ref{lem:Lip(J)} and taking into account that that differential of the norm agrees with the duality map on the sphere) avoiding this exegesis of the integer $0$.

Let us assume that (a), (b), (c) hold for $k=n$ and check the corresponding statements for $k=n+1$. 

We first observe that (c), the second part of (b) and the preceding lemma imply that prove that the map $x\in X\backslash \{0\}\longmapsto d_x^n f\in \mathscr L(^n X)$ 
is locally Lipschitz. 
Since $\mathscr L(^n X)$ is the dual of $X\widehat{\otimes}\cdots \widehat{\otimes}X$, which is separable, $(\dag)$ implies that
$d^nf: X\backslash\{0\}\longrightarrow \mathscr L(^n X)$
 is weak* differentiable on a dense subset of $X$. Of course this implies that $d^nf$ is weak* differentiable away from $0$. Indeed, assume 
 $x\neq 0$ is a point of weak* differentiability of $d^nf$ and let $T\in \Iso(X)$ and $\lambda>0$. The identities of (b) can be written as
$$
d^n f= (T\otimes\cdots\otimes T)^*\circ d^nf\circ T,\qquad
 \lambda^{n-1} d^n f\circ{\lambda{\bf I}_X}
$$
which are perhaps best understood looking at the commutative diagrams
$$
\xymatrixcolsep{5pc}
\xymatrix{
X \ar[d]_{d^nf} \ar[r]^T & X \ar[d]^{d^nf}\\
\mathscr L(^n X)  & \mathscr L(^n X) \ar[l]_-{(T\otimes\cdots\otimes T)^*}
}\qquad\quad
\xymatrixcolsep{5pc}
\xymatrix{
X \ar[d]_{d^nf} \ar[r]^{\text{multiplication by $\lambda$}} & X \ar[d]^{d^nf}\\
\mathscr L(^n X)  & \mathscr L(^n X) \ar[l]_{\lambda^{n-1}}
}
$$
Since the upper arrows are automorphisms of $X$ and the lower ones are weak* automorphisms of $\mathscr L(^n X)$ we see that $d^nf$ is weak* differentiable at $x$ if and only if it is at $Tx$ or $\lambda x$. But  the action of $\Iso(X)$ is transitive on spheres and so $d^nf$ is weak* differentiable at every $x\neq 0$.

Moreover, by the chain rule,
\begin{align*}
d_x d^n f &= (T\otimes\cdots\otimes T)^*\circ d_{Tx}d^nf\circ T,\\
 d_x d^n f&=  \lambda^{n-1}d_{\lambda x}d^nf\circ\lambda=  \lambda^{n}d_{\lambda x}d^nf,
\end{align*}
which,
switching to the multilinear mode through the canonical isometries
$$
\mathscr L\big(X, (\underbrace{X\widehat{\otimes}\cdots \widehat{\otimes}X}_{\text{$n$-times}})^*\big)=\mathscr L\big(X, \mathscr L(^n X)\big)= \mathscr L(^{n+1} X),
$$
 means that for every $y,y_1,\dots,y_n\in X$ one has
\begin{align*}
 d^{n+1}_x f(y,y_1,\dots,y_n)&= d^{n+1}_{Tx} f(Ty,Ty_1,\dots,Ty_n),\\
d^{n+1}_{x} f(y,y_1,\dots,y_n)&= \lambda^{n}d^{n+1}_{\lambda x} f(y,y_1,\dots,y_n).
\end{align*}
Thus the weak* differential $d_xd^n f=d_x^{n+1}f$ exists at every $x\neq 0$ and one has (b) for $k=n+1$. It remains to see that $d_xd^n f$ are actually Fr\'echet differentials and that (c) holds for $k=n+1$. But if (c) holds then the second part of (b) and the previous lemma imply that $x\longmapsto d_x d^nf= d_x^{n+1}f$ is Lipschitz on the annuli $r\le\|x\|\le R$ and ($\ddag$) guarantees that $d_xd^n f$ are Fr\'echet differentials, so it suffices to check (c). This is the point where LT is crucial:
Pick $x,y\in S_X$ and let $T\in \Iso(X)$ be such that $y=Tx$, with $\|T-{\bf I}_X\|\leq \Lambda\|x-y\|$. Then
\begin{align*}
\|d_y^{n+1}f- d_x^{n+1}f\|&= 
\|d_y^{n+1}f- d_{Tx}^{n+1}f (T\otimes\cdots\otimes T)\|\\
&\leq 
\|d_y^{n+1}\|\|{\bf I}_{X\widehat\otimes\cdots\widehat\otimes X}- T\otimes\cdots\otimes T\|\\
&\leq (n+1) \|d_y^{n+1}f\|\|{\bf I}_X-T\|\\
&\leq (n+1) M_{n+1} \Lambda\|x-y\|.\qedhere
\end{align*}
\end{proof}

\begin{proof}[End of the proof of Theorem~\ref{th:anal}]
We must prove that for each non\-zero $x\in X$ the Taylor series
\begin{equation}\label{eq:taylor}
\sum_{n=0}^\infty \frac{d_x^nf}{n!} (\underbrace{h,\dots,h}_{\text{$n$ times}})
\end{equation}
($f$ is the norm on $X$) converges uniformly to $\|x+h\|$ for $\|h\|$ sufficiently small. We may, and do, assume that $\|x\|=1$. Our immediate aim is to obtain an estimate of the norms $\|d_x^n f\|$, i.e., of the sequence $(M_n)_{n\geq 1}$. 

Obviously $M_1= 1$. The point is that the local Lipschitz constants of $d^n f: X\backslash\{0\}\longrightarrow \mathscr L(^n X)$ can be somehow controlled by $M_n$. This yields a bound of $\|d^{n+1}_x f\|$.

Recall from the proof of Lemma~\ref{lem:Coo} that if $\|x\|=\|y\|=1$, then
$$
\|d_y^{n}f- d_x^{n}f\| \leq n M_n \Lambda\|x-y\|.
$$
Let us estimate $\|d_x^{n+1}f\|=M_{n+1}$. 
Note that $d_x^{n+1}f= d_x d^n f$ can be seen as an operator
$X\longrightarrow \mathscr L(^n X)$, so that
$$
\|d_x^{n+1}f\|=\sup_{\|y\|\leq 1} \|d_x d^{n}f(y)\|_{\mathscr L(^n X)}
$$
Pick $y\in X$. We have
$$
d_x d^{n}f(y) =\lim_{t\to 0}\frac{d^n_{x+ty}f- d^n_{x}f}{t}.
$$
Using the inequalities (\ref{eq:using1}) and (\ref{eq:using2}) of the proof of Lemma~\ref{lem:k-homog} (with $x+ty$ instead of $y$ and $\alpha=1-n$) and taking into account that $\|x\|=1$ we obtain
$$
\big\|{d^n_{x+ty}f- d^n_{x}f}\big\|\leq 
2nM_n\Lambda \frac{\|ty\|}{\|x+ty\|}
+
M_n|n-1|\max\big(1,  \|x+ty\|^{-n}\big)\|ty\|.
$$
Dividing by $t$
and letting $t\longrightarrow 0$ we obtain $
\| d_x d^{n}f(y) \|\leq M_n\big((n-1)+2n \Lambda \big)\|y\|
$, so
$$ M_{n+1}\leq 3n\Lambda M_n\\
\quad \implies\quad  M_n\leq (n-1)!(3\Lambda )^{n-1}.
$$
Finally, to prove that the series (\ref{eq:taylor}) converges to $\|x+h\|$ for $\|h\|$ sufficiently small we can use the integral form for the remainder
$$
r_n(h)=f(x+h)-\sum_{k=0}^n \frac{d_x^kf}{k!}(\underbrace{h,\dots,h}_{\text{$k$ times}}) = \frac{1}{n!}\int_0^1(1-t)^n d_{x+th}^{n+1} f(\underbrace{h,\dots,h}_{\text{$n\!+\!1$ times}})\, dt,
$$
see for instance Cartan \cite[Th\'eor\`eme 5.6.1]{cartan}.

We know that the norm of $d_{x+th}^{n+1} f$ depends only on $\|x+th\|$ and since $\|x+th\|\geq 1-t\|h\|$ we have
$$
\|d_{x+
th}^{n+1} f\|\leq \frac{M_{n+1}}{\big(1-t\|h\|\big)^n}.
$$
Hence, for $\|h\|\leq 1$,
\begin{align*}
|r_n(h)| &=   \frac{1}{n!}\int_0^1(1-t)^n d_{x+th}^{n+1} f(h,\dots
, h)dt\\
&\leq  \frac{1}{n!}\int_0^1(1-t)^n \| d_{x+th}^{n+1} f\| \|h\|^{n+1}dt\\
& \leq 
\frac{M_{n+1}\|h\|^{n+1}}{n!}\int_0^1\frac{(1-t)^n} {\big(1-t\|h\|\big)^n} dt\\
&\leq 
\frac{n!(3\Lambda)^n\|h\|^{n+1}}{n!}\int_0^1\left(\frac{1-t} {1-t\|h\|}\right)^n dt\\
&\leq (3\Lambda)^n\|h\|^{n+1}.
\end{align*}
Thus the Taylor series (\ref{eq:taylor}) converges uniformly to $\|x+h\|$ on any ball of radius $r<1/(3\Lambda)$.
\end{proof}

The preceding result  drastically reduces  the list of Banach spaces that could have a LT norm. Indeed MT Banach spaces are uniformly convex and Deville's {\em Great Theorem} (see \cite[Chapter V, \S 4]{DGZ} for an exposition) implies that a space $X$ containing no copy of $c_0$ with a $C^\infty$-smooth norm has the following properties:
\begin{enumerate}
\item $X$ has type 2 (due to Fabian, Whitfield, and Zizler);
\item $X$ has exact cotype $q$, where $q$ is an even integer;
\item $X$ contains an isomorphic copy of $\ell_q$, where $q$ is the cotype of $X$.
\item There is a $q$-form $a$ on $X$ and constants $C, c>0$ such that $c\|x\|^q\leq a(x)\le C\|x\|^q$ for all $x\in X$.
\end{enumerate}
Consequently, a LT space is isomorphic to a Hilbert space in any of the following cases:
\begin{itemize}
\item $X^*$ is isomorphic to $X$;
\item $X^*$ has a LT renorming;
\item $X$ does not contain $\ell_q$ for $q\geq 4$, even.
\end{itemize}
In fact, we can go a little further: according to 
\cite[13.16 Proposition]{MS} every infinite-dimensional $K$-convex space (something that all super-reflexive spaces are) with exact cotype $q$ contains uniformly complemented, almost isometric copies of $\ell_q^n$ (precisely there is $C$ such that for every $n\in \N$ and every $\eps>0$ the space contains a subspace whose Banach-Mazur distance to $\ell_q^n$ is at most $1+\eps$ which is complemented by a projection of norm at most $C$; this is a ``complemented'' version of the Maurey-Pisier theorem and it actually follows from the results in \cite{pisier}).
Thus if $X$ is LT and not isomorphic to a Hilbert space, then
$X$ has exact cotype $q\ge 4$, even, and it is $K$-convex. Hence,
if $\mathscr U$ is a free ultrafilter on $\N$, the ultrapower $X_\mathscr U$ contains a complemented subspace $Y$ isometric to $\ell_q$ (and another one isometric to $L_q$, by the way). But $X_\mathscr U$ is LT with the same constant than $X$ and by the comment after Theorem~\ref{th:anal} there is a further separable LT subspace $Z$ such that $Y\subset Z\subset X_\mathscr U$. We, therefore, have the following no-result:

\begin{noresult}\label{no:either}
Either every Lipschitz-transitive space is isomorphic to a Hilbert space or there is one which is separable and contains a complemented subspace isometric to $\ell_q$ for some $q\ge 4$, even.
\end{noresult}

If the LT constant is small we do much better:

\begin{cor}
If $X$ is Lipschitz transitive with constant $\Lambda<3$, then $X$ is isomorphic to a Hilbert space.
\end{cor}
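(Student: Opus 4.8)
\emph{The plan is to argue by contradiction through the dichotomy recorded in the no-result~\ref{no:either}.} Were there a Lipschitz-transitive space that is not isomorphic to a Hilbert space, the discussion preceding the no-result would furnish a \emph{separable} Lipschitz-transitive space $Z$, with the same constant $\Lambda<3$, containing a subspace isometric to $\ell_q$ for some even $q\ge 4$. I would then extract a contradiction from a second-order estimate for the norm of $Z$ together with an explicit computation inside $\ell_q$.

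The quantitative heart of the matter is the bound
\[
M_2:=\sup_{x\in S}\|d_x^2 f\|_{\mathscr L(^2 X)}\le\Lambda,
\]
valid for any $\Lambda$-LT norm $f=\|\cdot\|$ (recall that such an $X$ is smooth, even analytic, by Theorem~\ref{th:anal}). To obtain it I would read $d^2_x f$ as the derivative along the sphere of the differential $J=df$, which on $S$ is the duality map: writing a direction as $u=u_\tau+\langle J(x),u\rangle x$ with $u_\tau\in\ker J(x)$, the $1$-homogeneity of the norm gives $d^2_x f(x,\cdot)=0$, whence $d^2_x f(u,w)=\langle d_xJ(u_\tau),w\rangle$. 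Since Lemma~\ref{lem:Lip(J)} together with the definition of $\Lambda$-LT makes $J$ a $\Lambda$-Lipschitz map on $S$, the tangential derivative obeys $\|d_xJ(u_\tau)\|\le\Lambda\|u_\tau\|$, and a careful treatment of the radial projection $u\mapsto u_\tau$ yields $\|d_x^2 f\|\le\Lambda$. Because second differentials of a norm restrict to subspaces, the analogous supremum for the isometric copy of $\ell_q$ sitting inside $Z$ is at most $\Lambda$ as well.

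It then remains to exhibit one point and one direction of $\ell_q$ at which the second differential is large. At the balanced point $x_0=2^{-1/q}(e_1+e_2)\in S_{\ell_q}$ and in the unit tangent direction $v=2^{-1/q}(e_1-e_2)$ one has
\[
\|x_0+tv\|=\Bigl(\tfrac{(1+t)^q+(1-t)^q}{2}\Bigr)^{1/q}=1+\tfrac{q-1}{2}\,t^2+O(t^4),
\]
so that $d_{x_0}^2 f(v,v)=q-1$. As $x_0,v$ lie in the copy of $\ell_q$, this value equals $d^2_{x_0}f_Z(v,v)$, whence $M_2(Z)\ge\|d^2_{x_0}f_Z\|\ge q-1$. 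Combining with the previous paragraph gives $q-1\le M_2(Z)\le\Lambda<3$, hence $q<4$, contradicting $q\ge 4$. Therefore no such $Z$ can exist and every $\Lambda$-LT space with $\Lambda<3$ is isomorphic to a Hilbert space.

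The step I expect to be the real obstacle is the sharp constant in $M_2\le\Lambda$. The homogeneity estimates of Lemma~\ref{lem:k-homog}, as used in the proof of Theorem~\ref{th:anal}, only deliver $M_2\le 2\Lambda$, the spurious factor $2$ arising from comparing points at different radii; that weaker bound forces merely $q<7$, i.e. it would leave the cases $q\in\{4,6\}$ open. Squeezing the constant down to $\Lambda$ amounts to controlling the radial projection $u\mapsto u-\langle J(x),u\rangle x$ with norm $1$ rather than the trivial bound $2$, and it is exactly this refinement that produces the threshold $\Lambda<3$ via the equivalence $q-1<3\Leftrightarrow q<4$. Verifying that the projection is a contraction in the directions that matter is the delicate point; it is consistent with $\ell_q$ itself, where the extremal direction $v$ above is tangential, the projection contracts there, and indeed the Lipschitz constant $\operatorname{Lip}(J_{\ell_q})$ equals $q-1$.
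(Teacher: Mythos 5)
You have correctly identified the weak point yourself, and unfortunately it is a genuine gap: the whole argument rests on the sharp bound $M_2\le\Lambda$, and the justification offered --- that the radial projection $P_x u=u-\langle J(x),u\rangle x$ onto $\ker J(x)$ along $x$ can be treated as a contraction ``in the directions that matter'' --- is precisely what fails. In a general smooth Banach space one only has $1\le\|P_x\|\le 2$, and nothing in the definition of $\Lambda$-LT forces $\|P_x\|=1$; your consistency check verifies contractivity only at special points of $\ell_q$ itself, whereas the bound $M_2\le\Lambda$ must be applied in the ambient LT space $Z$, about whose geometry at $x_0$ you know nothing beyond smoothness. With the honest estimate $\|u_\tau\|\le 2\|u\|$ you recover only $M_2\le 2\Lambda$, hence $q-1\le 2\Lambda<6$, which (as you note) leaves exactly the cases $q\in\{4,6\}$ open. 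So, as written, the proof does not close.

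The good news is that the detour through the global quantity $M_2$ is unnecessary, because your extremal direction is tangential: at $x_0=2^{-1/q}(e_1+e_2)$ one has $\langle J(x_0),v\rangle=0$ for $v=2^{-1/q}(e_1-e_2)$, so no projection ever intervenes. Differentiating $t\mapsto J(\gamma(t))$ along a curve in $S$ with $\gamma(0)=x_0$, $\gamma'(0)=v$, and using that $J$ is $\Lambda$-Lipschitz on $S$ (Lemma~\ref{lem:Lip(J)} plus the LT hypothesis; the second differential exists by Theorem~\ref{th:anal}) gives directly $q-1=d^2_{x_0}f(v,v)\le\Lambda\|v\|^2=\Lambda$, the desired contradiction for $\Lambda<3$ and $q\ge 4$. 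This repaired argument is the infinitesimal shadow of the paper's proof, which is genuinely different in form: there one never differentiates twice, but observes that the composition $I^*JI$ of the isometric embedding $I:\ell_q^2\to X$, the $\Lambda$-Lipschitz duality map $J$, and the contraction $I^*$ is the Mazur map $M_{qp}$ on $S_{\ell_q^2}$, hence $\Lambda$-Lipschitz, and then shows by an elementary computation at the pair of points $(x,y)$, $(y,x)$ near the balanced point that $\operatorname{Lip}(M_{qp})\ge q/p=q-1$. The paper's macroscopic route needs nothing beyond Lemma~\ref{lem:Lip(J)}, while your infinitesimal route, once restricted to the tangential direction where it is actually valid, yields the same threshold with the same extremal configuration.
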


\begin{proof}
By the preceding argument, if $X$ is not isomorphic to a Hilbert space, then taking an ultrapower we may assume that $X$ contains an isometric copy of $\ell_q^2$ for some $q\ge 4$ even.

Let $I:\ell_q^2\longrightarrow X$ be an isometric embedding and $I^*:X^*\longrightarrow \ell_p$ be the dual quotient map, where $p^{-1}+q^{-1}=1$. We know from Lemma~\ref{lem:Lip(J)} that the duality map $J:S\longrightarrow S^*$ is $\Lambda$-Lipschitz. It is clear that the composition
$$
\xymatrixcolsep{3.5pc}
\xymatrix{
S_{\ell_q^2}\ar[r]^I & S \ar[r]^J & S^* \ar[r]^{I^*} & {\ell_p^2}
}
$$ 
takes values in the sphere of $\ell_p^2$ since no step can increase the norm and $\langle I^*JI(y), y\rangle= \langle JI(y), Iy\rangle= 1$ provided $\|y\|_q=1$. This implies that $I^*JI$ is the duality map on the sphere of $\ell_q^2$ which agrees with the Mazur map given by
$M_{qp}(x,y)= \big(\sigma(x)|x|^{q/p}, \sigma(y)|x|^{q/p}\big)$, where $\sigma$ is the signum function. The following result shows that this map cannot be $\Lambda$-Lipschitz for any $\Lambda<3$ if $q\ge 4$ and ends the proof.
\end{proof}

\begin{lemma}
Let $1<p<q<\infty$ be arbitrary (not necessarily conjugate) exponents. Then the Lipschitz constant of the Mazur map $M_{qp}:\ell_q^2\longrightarrow\ell_p^2$ on the unit sphere of $\ell_q^2$ is bounded below by $q/p$.
\end{lemma}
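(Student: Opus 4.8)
The plan is to bound the global Lipschitz constant from below by exhibiting an explicit family of pairs of points on $S_{\ell_q^2}$ whose image distance ratio tends to $q/p$. Recall that $M_{qp}$ acts coordinatewise through $\phi(s)=\sigma(s)|s|^{q/p}$, and that it does map $S_{\ell_q^2}$ onto $S_{\ell_p^2}$, since $\sum_i|x_i|^{(q/p)p}=\sum_i|x_i|^q$. Because we only need a lower bound, it suffices to produce, for every small $\eps>0$, two points $u,w\in S_{\ell_q^2}$ with $\|M_{qp}(u)-M_{qp}(w)\|_p\ge (q/p-\eps)\|u-w\|_q$; any Lipschitz constant of $M_{qp}$ must then be at least $q/p$.

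The key choice is to work at the \emph{diagonal}, not near a pole. I would take points symmetric under the coordinate swap: $u=(c,d)$ and $w=(d,c)$ with $c>d>0$ and $c^q+d^q=1$, and then let $c,d\to 2^{-1/q}$. The symmetry is what makes the estimate transparent, since both $u-w=(c-d)(1,-1)$ and $M_{qp}(u)-M_{qp}(w)=(c^{q/p}-d^{q/p})(1,-1)$ are scalar multiples of $(1,-1)$, so the normalizing constants factor out cleanly, giving
\[
\frac{\|M_{qp}(u)-M_{qp}(w)\|_p}{\|u-w\|_q}=\frac{|c^{q/p}-d^{q/p}|}{|c-d|}\cdot\frac{2^{1/p}}{2^{1/q}}.
\]
As $c,d\to 2^{-1/q}=:a$, the difference quotient tends to the derivative of $s\mapsto s^{q/p}$ at $a$, namely $\frac{q}{p}\,a^{q/p-1}=\frac{q}{p}\,2^{-(1/p-1/q)}$. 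The two powers of $2$ then cancel against the factor $2^{1/p-1/q}$, and the ratio converges to exactly $q/p$, as desired.

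The only genuinely substantive point --- and the place where a naive approach goes wrong --- is the location of the test points. One is tempted to look near a pole $(1,0)$, where $\phi'(1)=q/p$ is the largest coordinatewise stretch; but there the only admissible (tangential) displacements lie in the second coordinate, where $\phi'(s)\to 0$ as $s\to 0$ because $q/p>1$, so the stretch along the sphere degenerates to $0$. The diagonal is precisely the configuration where both coordinates move and the full factor $q/p$ survives, the symmetry guaranteeing that the $\ell_q$- and $\ell_p$-normalizations $2^{1/q}$ and $2^{1/p}$ compensate exactly for the exponent $q/p-1$ appearing in the derivative. Everything else reduces to the elementary limit above, a single application of the definition of the derivative of $s^{q/p}$, so no further obstacle is expected.
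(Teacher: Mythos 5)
Your proposal is correct and is essentially the paper's own argument: the paper likewise tests the Lipschitz constant on the swapped diagonal pair $(x,y)$ and $(y,x)$ with $x^q+y^q=1$, obtains the same ratio $\frac{2^{1/p}}{2^{1/q}}\cdot\frac{|x^{q/p}-y^{q/p}|}{|x-y|}$, and lets the points approach $2^{-1/q}(1,1)$. The only (cosmetic) difference is how the final limit is evaluated --- you invoke the derivative of $s\mapsto s^{q/p}$ (a mean-value argument) where the paper substitutes $s=x^q$ and applies L'H\^opital; both yield $\frac{q}{p}\,2^{-(1/p-1/q)}$, cancelling the normalizing factor $2^{1/p-1/q}$ to give exactly $q/p$.
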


\begin{proof}
This is surely well-known (cf. \cite[Proof of Theorem 9.1]{BL}); we include the proof for the sake of completeness. 
We write $M$ instead of $M_{qp}$ and work on the first quadrant 
so that $M(x,y)= (x^{q/p},y^{q/p})$. We have
\begin{align*}
\|(x,y)-(y,x)\|_q &=  \|(x-y, y-x)\|_q = 2^{1/q}|x-y|;\\
\|M(x,y)-M(y,x)\|_p &=  \|(x^{q/p}\!-y^{q/p}, y^{q/p}\!-x^{q/p})\|_p= 2^{1/p}|x^{q/p}\!-y^{q/p}|.
\end{align*}
Thus, if $x^q+y^q=1$,
$$
\frac{\|M(x,y)-M(y,x)\|_p}{\|(x,y)-(y,x)\|_q}
= \frac{2^{1/p}}{2^{1/q}} \frac{|x^{q/p}-y^{q/p}|}{ |x-y|}
= \frac{2^{1/p}}{2^{1/q}} \bigg{|} \frac{x^{q/p}-(1-x^q)^{1/p}}{ x- (1-x^q)^{1/q}}\bigg{|}.
$$
Letting $s=x^q$ and applying L'H\^opital's rule we see that the limit of the function inside the absolute value as
 $x\longrightarrow 2^{-1/q}$ 
is 
$$
 \lim_{s\to 1/2} \frac{s^{1/p}-(1-s)^{1/p}}{s^{1/q}-(1-s)^{1/q}}
 =  \lim_{s\to 1/2} \frac{\frac{1}{p}s^{1/p-1}+ \frac{1}{p}(1-s)^{1/p-1}}{\frac{1}{q}s^{1/q-1}+ \frac{1}{q}(1-s)^{1/q-1}}= \frac{q}{p}
 \frac{2^{-1/p}}{2^{-1/q}}.\qedhere
$$
\end{proof}

Since ultrapowers preserve MT and since no MT space can contain a 1-complemented subspace isometric to $\ell_q^2$ for $q\neq 2$ (this was observed in \cite{CDKKLM}) it is clear that if $q\neq 2$ no MT space can contain for each $\eps>0$ a $(1+\eps)$-complemented subspace ($1+\eps$)-isometric to $\ell_q^2$. Thus, in view of the dichotomy in Assertion 
\ref{no:either}, the following general question arises:

\begin{quest}
Does every separable Banach space containing a complemented subspace isometric to $\ell_q$  contain  $(1+\eps)$-complemented copies of $\ell_q^2$ for all $\eps>0$?
\end{quest}

(While it is relatively easy to see that the answer is affirmative when the copy of $\ell_q$ has finite codimension, it is likely that the answer is negative in general.)

Another consequence is that if $X$ is separable and LT and $\Iso(X)$ is amenable in the SOT (something that the group of isometries of any Hilbert space {\em is}, proved by Gromov and Milman), then the norm of $X$ arises from a $q$-form in the sense that $\|x\|=a(x)^{1/q}$, where $q$ is an even integer that agrees with the cotype of $X$, so the preceding question has some interest even for spaces whose norm comes from a $q$-norm ($q=4,6,\dots$).


Anyway, the hottest question on LT is, of course, if it passes to the dual in which case each LT space would be isomorphic to a Hilbert space. It is perhaps a little ironic that the answer is affirmative when one {\em knows} that the underlying space is isomorphic to a Hilbert space (in particular, if $\Lambda<3$), as it clearly follows from Theorem~\ref{th:AThilbert}; see the proof of Corollary~\ref{cor:2Gateaux}(c).

As in other transitivity problems the main difficulty seems to be that, while the properties of the action of the isometry group on the sphere can be translated into smoothnes properties of the norm (e.g., type, smoothness, modulus of smoothness), we do not have such direct control of its convexity (cotype, strict convexity, modulus of convexity); see the proof of  \cite[Theorem 28]{FR2} for another manifestation of this phenomenon.

\section{Spaces with a large semigroup of contractive automorphisms}\label{sec:ST}

Most of this section deals with renormings of Hilbert spaces.
Our first result, however, applies Effros's theorem to winkle out some information on the (mostly hypothetical) finite-dimensional 1-complemented subspaces of a separable transitive Banach space.

\subsection{Semitransitivity}
Recall that $\Aut_1(X)$ denotes the semigroup of contractive automorphisms of $X$. 

\begin{defi}\label{def:ST}
A normed space $X$ will be called semitransitive (ST) if, given $x,y\in S$, there is $T\in \Aut_1(X)$ such that $y=Tx$. We say that $X$ is uniformly micro-semitransitive (UMST) if for every 
$\eps>0$ there is $\delta>0$ such that for every $x,y\in S$ satisfying $\|x-y\|\le \delta$ there exists $T\in \Aut_1(X)$ such that $y=Tx$ and $\|T-{\bf I}_X\|\le \eps$.
\end{defi}
These are the halved versions of transitivity and MT, respectively; see Section~\ref{sec:MST} to see why we have chosen the second name.
The implications ``transitive $\implies$ ST'' and ``MT $\implies$ UMST'' are obvious. The later property was introduced in \cite[Definition 2.2]{CDKKLM} where it is shown that UMST $\implies$ ST and that UMST  is a self-dual property that is inherited by 1-complemented subspaces (in particular each 1-complemented subspace of a MT space is UMST) and implies both uniform convexity and uniform smoothness. Becerra and Rodr\'\i guez-Palacios show in \cite{BRP} (whose title I have plagiarised for the heading of this section) that all these implications hold even for their $(\eps,	\delta)$-property 
(which is the weak version of UMST obtained by replacing ``for every $\eps>0$'' by ``for some $0<\eps<1$'' and shall not be treated here).


\begin{theorem}\label{th:UMST}
Every $1$-complemented, finite-dimensional subspace of a separable transitive Banach space is uniformly micro-semitransitive.
\end{theorem}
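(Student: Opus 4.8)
The plan is to transport moves that take place inside $Y$ up to genuine isometries of the ambient space $X$, to realise these close to the identity via Effros's theorem, and finally to compress them back down to $Y$ with the norm-one projection.

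\textbf{Compression.} First I would fix a norm-one projection $P\colon X\to Y$ (available because $Y$ is $1$-complemented) and consider $\Phi\colon\Iso(X)\to\mathscr L(Y)$, $\Phi(T)=PT|_Y$. Since $P$ is contractive and $T$ isometric, $\|\Phi(T)\|\le1$; moreover $\Phi({\bf I}_X)={\bf I}_Y$ because $P$ fixes $Y$, and for $u\in Y$ one has $(\Phi(T)-{\bf I}_Y)u=P(T-{\bf I}_X)u$, so $\|\Phi(T)-{\bf I}_Y\|\le\|(T-{\bf I}_X)|_Y\|$. Hence if $\|(T-{\bf I}_X)|_Y\|<1$ then $\Phi(T)$ is invertible and thus lies in $\Aut_1(Y)$, while $Tx=y$ with $x,y\in S_Y$ forces $\Phi(T)x=Py=y$. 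It therefore suffices to establish the following statement about $X$: for every $\eps\in(0,1)$ there is $\delta>0$ such that whenever $x,y\in S_Y$ with $\|x-y\|\le\delta$ there exists $T\in\Iso(X)$ with $Tx=y$ and $\|(T-{\bf I}_X)|_Y\|\le\eps$. (Proving this for small $\eps$ is enough, since UMST for $\eps<1$ trivially gives it for all $\eps$.)

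\textbf{Effros.} Next I would put the SOT on $\Iso(X)$. For separable $X$ this is a Polish group and it acts continuously and transitively on the Polish space $S_X$, so Effros's theorem yields microtransitivity: for each $x\in S_X$ the orbit map $g\mapsto gx$ is open. As $Y$ is finite-dimensional the functional $T\mapsto\|(T-{\bf I}_X)|_Y\|$ is SOT-continuous, so $U=\{T:\|(T-{\bf I}_X)|_Y\|<\eps\}$ is an open neighbourhood of ${\bf I}_X$.

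\textbf{Uniformity, and the main obstacle.} The hard part is upgrading the \emph{pointwise} openness granted by Effros to the \emph{uniform} $\delta$ demanded by UMST, and this is where finite-dimensionality (compactness of $S_Y$) is essential. I would argue by contradiction: if the displayed statement fails for some $\eps\in(0,1)$, choose $x_n,y_n\in S_Y$ with $\|x_n-y_n\|\le1/n$ admitting no admissible $T$, and pass to a subsequence with $x_n\to x_0$, hence also $y_n\to x_0$, in the compact set $S_Y$. Using joint continuity of the group operations I would pick a neighbourhood $U'$ of ${\bf I}_X$ with $U'(U')^{-1}\subset U$; microtransitivity at the single point $x_0$ makes $U'x_0$ a neighbourhood of $x_0$, so for large $n$ there are $g_n,h_n\in U'$ with $g_nx_0=x_n$ and $h_nx_0=y_n$. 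Then $T_n=h_ng_n^{-1}\in U'(U')^{-1}\subset U$ satisfies $T_nx_n=h_nx_0=y_n$ and $\|(T_n-{\bf I}_X)|_Y\|<\eps$, contradicting the choice of $x_n,y_n$. A technical point I would want to pin down along the way is that $(\Iso(X),\text{SOT})$ genuinely is a Polish group for separable $X$; this follows from the fact that $T\mapsto(T,T^{-1})$ embeds it as a closed subgroup of $(B_{\mathscr L(X)},\text{SOT})^2$, the latter being Polish.
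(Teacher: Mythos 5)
Your proof is correct, and its skeleton is the same as the paper's: Effros's theorem applied to the SOT-Polish group $\Iso(X)$ acting transitively on the Polish space $S_X$, compression by the contractive projection $P$, and compactness of $S_Y$ to upgrade pointwise openness to a uniform $\delta$ via a convergent subsequence and the composition of two small moves through the limit point $x_0$. The one genuine difference is where you deal with the absence of inverses in $\Aut_1(Y)$: you compose the two moves inside the \emph{group} $\Iso(X)$, where $g_n^{-1}$ exists and the inclusion $U'(U')^{-1}\subset U$ does all the work, and you compress down to $Y$ only once, at the very end. The paper compresses first and therefore runs the compactness argument inside the \emph{semigroup} $\Aut_1(Y)$; since one cannot invert there, its pointwise modulus $\delta(x,\eps)$ has to be defined two-sidedly (an $L\in\Aut_1(Y)$ with $y=Lx$ \emph{and} an $R\in\Aut_1(Y)$ with $x=Ry$), the SOT-neighbourhood $V_\eta$ is taken symmetric under $T\mapsto T^{-1}$, and an ``interchange $x_n$ and $y_n$'' case analysis appears in the contradiction step --- all of which your arrangement makes evaporate. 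Two smaller substitutions are harmless: your directly defined open set $U=\{T\in\Iso(X):\|(T-{\bf I}_X)|_Y\|<\eps\}$, justified by SOT-continuity of $T\mapsto\|(T-{\bf I}_X)|_Y\|$ on the bounded set $\Iso(X)$ via compactness of $B_Y$, plays exactly the role of the paper's basis-defined neighbourhood $V_\eta$; and your closed-embedding argument for the Polishness of $(\Iso(X),\mathrm{SOT})$ is the standard fact the paper quotes without proof. Net effect: same theorem, same key lemma, but your ordering (uniformize in the group, then compress) is a cleaner division of labour than the paper's (compress, then uniformize in the semigroup).
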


This would be one giant leap for mankind if we knew that transitive spaces contained 1-complemented subspaces of finite dimension greater than 1. Since we do not know if this the case 
the theorem is just a small step, hopefully in the right direction. 

The proof is a true-blue  
application of the Effros microtransitivity theorem.
 Given Banach spaces $X$ and $Y$, the  
strong operator topology (SOT) on  $\mathscr L(X,Y)$ is just
the restriction to product topology of $Y^X$.
This topology makes $\Iso(X)$ into a Polish group when $X$ is separable. A typical neighbourhood of ${\bf I}_X$ in the SOT has the form $$\{T\in\Iso(X): \|x_i-Tx_i\|\leq \varepsilon\, \text{ for all $1\le i\le k$}\},$$ 
where $x_i\in S_X$ and $\eps>0$.

\begin{proof}[Proof of Theorem~\ref{th:UMST}]
Assume $X$ is complete, separable, and transitive and let $Y$ be a finite-dimensional subspace of $X$ complemented by a contractive projection $P$. 

Fix $\eps\in(0,1)$. Let $y_1,\dots, y_k$ be a normalized basis for $Y$ and take $\eta>0$ such that if a contraction $L\in\mathscr L(Y)$ satisfies $\|y_i-Ly_i\|\le \eta$ for $1\leq i\leq k$, then $\|L-{\bf I}_Y\|\le\eps$. Put
$$
V_\eta=\{T\in \Iso(X): \|y_i-Ly_i\|\le \eta\, \text{ for all $1\le i\le k$}\}.
$$
Note that $T\in V_\eta\iff  T^{-1}\in V_\eta$ and that $T\in V_\eta\implies \|T_P- {\bf I}_Y\|\le\eps$, where $T_P=PT|_Y$. Now, given $x\in S_Y$, we define $\delta(x,\eps)$ as the supremum of those numbers $\delta\ge 0$ for which the following condition holds:

\smallskip

$\bullet$
For every $y\in S_Y$ such that $\|x-y\|\le\delta$ there exist $L,R\in\Aut_1(Y)$ such that $\|L-{\bf I}_Y\|, \|R-{\bf I}_Y\|\le\eps, y=Lx, x=Ry$.
\smallskip

By Effros's theorem for every $x\in S_X$ the set $\{Tx: T\in V_\eta\}$ is open in $S_X$ and this clearly implies that $\delta(x,\eps)>0$ for all $x\in S_Y$ and all $\eps\in(0,1)$. It remains to see that  $\delta(x,\eps)\ge c$ for some $c>0$ and all $x\in S_Y$.

If we assume the contrary there is a sequence $(x_n)$ in $S_Y$ such that $\delta(x_n,\eps)\longrightarrow 0$ and so there is another sequence $(y_n)_{n\ge 1}$ such that $\|x_n-y_n\|\longrightarrow 0$ but either there is no $L\in\Aut_1(Y)$ such that $\|L-{\bf I}_X\|\le\eps$ and $y=Lx$ or 
 there is no $R\in\Aut_1(Y)$ such that $\|R-{\bf I}_X\|\le\eps$ and $x=Ry$. 
 
 Interchanging $x_n$ by $y_n$ when necessary we may assume that the first option is always the case. Using the compactness of $S_Y$ and passing to a subsequence we may assume that $(x_n)$ and so $(y_n)$ converge to, say $x$. But for this $x$ we have $\delta(x,\eps/2)>0$ and so $\|x-x_n\|,\|x-y_n\|\le \delta(x,\eps/2)$ for $n$ sufficiently large. For these $n$ there exist $R_n\in \Aut_1(Y)$ such that $\|R_n-{\bf I}_X\|\le\eps/2$ and $x=R_nx_n$ and $L_n\in \Aut_1(Y)$ such that $\|L_n-{\bf I}_X\|\le\eps/2$ and $y_n=L_nx$ and considering $L=L_nR_n$ we obviously reach a contradiction.
\end{proof}

\begin{cor}
Every $1$-complemented, finite-dimensional subspace of a separable transitive Banach space has modulus of convexity and smoothness of power type $2$.
\end{cor}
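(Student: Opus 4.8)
The plan is to extract power type $2$ from the self-duality of micro-semitransitivity together with Lemma~\ref{lem:Lip(J)}, reducing the whole statement to a single Lipschitz bound for the duality map.

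By Theorem~\ref{th:UMST} the subspace $Y$ is UMST, and since UMST is self-dual \cite{CDKKLM} so is $Y^*$; in particular both are uniformly smooth and uniformly convex, so the duality map $J\colon S_Y\to S_{Y^*}$ is well defined. The key reduction is that the two power-type-$2$ properties are dual to each other: $Y$ has modulus of convexity of power type $2$ exactly when $Y^*$ has modulus of smoothness of power type $2$ \cite{BL}. It therefore suffices to establish the single implication that a UMST space has modulus of smoothness of power type $2$; applying it to $Y$ gives the smoothness half, and applying it to $Y^*$ gives, by the quoted duality, the convexity half for $Y$.

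To prove that implication I would reduce power type $2$ smoothness to a Lipschitz bound for $J$ on the sphere. Writing $\rho$ for the modulus of smoothness and using
\[
\|x+\tau y\|+\|x-\tau y\|-2=\int_0^\tau\Big\langle \frac{J(x+sy)}{\|x+sy\|}-\frac{J(x-sy)}{\|x-sy\|},\,y\Big\rangle\,ds\qquad(x,y\in S_Y),
\]
where by $1$-homogeneity $J(z)/\|z\|=J(z/\|z\|)\in S_{Y^*}$, an estimate $\|J(u)-J(v)\|\le L\|u-v\|$ for $u,v\in S_Y$ makes the integrand $O(s)$ and hence gives $\rho(\tau)=O(\tau^2)$. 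Now Lemma~\ref{lem:Lip(J)} delivers exactly such a bound from the action: if $v=Tu$ with $T\in\Aut_1(Y)$ then $\|J(u)-J(v)\|\le\|T-{\bf I}_Y\|$. Consequently $J$ is Lipschitz as soon as UMST holds with a \emph{linear rate}, i.e. as soon as there is $\Lambda\ge1$ such that any two points $u,v\in S_Y$ are joined by some $T\in\Aut_1(Y)$ with $\|T-{\bf I}_Y\|\le\Lambda\|u-v\|$.

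The heart of the matter, and what I expect to be the main obstacle, is upgrading the qualitative UMST coming out of Effros's theorem to this linear rate. Openness by itself is not enough, since an open map may be open only at a sublinear rate, so one must use that the orbit map $T\mapsto Tu$ is the restriction of the \emph{linear} map $B\mapsto Bu$ to the semigroup $\Aut_1(Y)$, together with the compactness of $S_Y$. Concretely I would study the tangent cone to $\Aut_1(Y)$ at ${\bf I}_Y$, namely the dissipative generators $\{A:\langle J(z),Az\rangle\le0\ \text{for all}\ z\}$, and show that for each $u$ these generators move $u$ in every tangent direction, with a bound uniform in $u\in S_Y$; a finite-dimensional compactness argument then produces a single $\Lambda$. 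This non-degeneracy of the first-order action is the delicate point. Once it is in place, Lemma~\ref{lem:Lip(J)}, the integral formula and the duality of moduli finish the proof, and the outcome is consistent with the earlier observation that no microtransitive space can contain a $1$-complemented $\ell_q^2$ with $q\neq2$, such a space violating power type $2$.
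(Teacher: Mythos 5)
Your frame coincides with the paper's up to the decisive step: Theorem~\ref{th:UMST} makes $Y$ uniformly micro-semitransitive, the self-duality of UMST from \cite{CDKKLM} handles $Y^*$, and your duality claim is correct at the exponent $2$ (modulus of convexity of power type $2$ dualizes to modulus of smoothness of power type $2$). The integral formula and the use of Lemma~\ref{lem:Lip(J)} are also sound: a bound $\|J(u)-J(v)\|\le L\|u-v\|$ on $S_Y$ does give $\rho_Y(\tau)=O(\tau^2)$. But there is a genuine gap exactly where you flag it, and it is not a deferred technicality — it is the entire quantitative content of the statement. Nothing in the proposal proves that UMST holds with a \emph{linear} rate, i.e.\ that $Y$ is Lipschitz-semitransitive. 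Effros's theorem, as used in Theorem~\ref{th:UMST}, yields uniform openness of the orbit map with an unspecified modulus $\delta(\eps)$, which may a priori be sublinear in $\eps$. Your sketch via the cone of dissipative generators does not close this: qualitative openness of the set $\{Tu: T\in\Aut_1(Y),\ \|T-{\bf I}_Y\|\le\eps\}$ around $u$ does not imply that the first-order cone $\{Au: A \text{ dissipative}\}$ covers the tangent space at $u$ with bounded norms, because the tangent cone to an image need not be the image of the tangent cone — openness can be produced by higher-order effects, which is precisely the ``sublinear rate'' scenario you yourself describe. Note also that the paper explicitly declines to study Lipschitz-semitransitivity because essentially nothing is known about it (Section~\ref{sec:MST}); were your upgrade provable, you would have shown that every $1$-complemented finite-dimensional subspace of a separable transitive space is LST, a strictly stronger and currently unestablished assertion.

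The paper's own proof sidesteps all of this: it deduces the corollary by combining Theorem~\ref{th:UMST} with two cited results of \cite[Corollary 2.14 and Proposition 3.4]{CDKKLM} — namely, a UMST space isomorphic to a Hilbert space (automatic in finite dimensions) has modulus of convexity of power type $2$, and UMST is self-dual — so the smoothness half follows by the same duality you invoke. To repair your argument you would either import that citation, at which point the duality-map machinery becomes unnecessary, or genuinely prove the linear-rate (or at least ``$J$ Lipschitz'') claim for UMST spaces in finite dimensions, which is the hard, open part of your plan rather than a finishing compactness argument.
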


This is a pleasant consequence of the facts, proved in the UN paper \cite[Corollary 2.14 and Proposition 3.4]{CDKKLM}, that a USMT space isomorphic to a Hilbert space has modulus of convexity of power type 2 and that USMT is a self-dual property. Thus $\ell_p^n$ cannot be the range of a contractive projection on a separable transitive Banach space in striking contrast with 
Lusky's result that {\em every} separable (in particular finite-dimensional) Banach space is 1-complemented in some separable AT Banach space; see \cite[Theorem 2.14]{becerra} for some extensions. An easy consequence is that every separable reflexive (in particular finite-dimensional)  space is 1-comple\-mented in a transitive Banach space whose density character is $\aleph_1$, irrespectively of the ``value'' of the first uncountable cardinal.

The first precursor of the preceding corollary of which I am aware is
the following result of Greim, Jamison and Kami\'nska \cite[Theorem 3.3]{GJK}: separable transitive Banach spaces have trivial $L^p$-structure for $p\neq 2$.

\subsection{Semitransitive norms on Hilbert spaces}
This section studies ST norms on Hilbert spaces. Our motivation for doing so is twofold: first, most features of USMT spaces are actually shared by ST spaces (at least in finite dimensions) and second, unlike general AT spaces, AT renormings of Hilbert spaces are ST.

A subset of a Banach space $X$ which is the image of the unit ball of a Hilbert space $H$ under some linear isomorphism 
$U:H\longrightarrow X$ is called an {\em ellipsoid}. We say that an ellipsoid $E$ is inner (respectively, outer) at $x\in S_X$ if $x\in E$ and $E\subset B_X$ (respectively, $B_X\subset E$).
Of course these notions make sense only for spaces that are isomorphic to (that is, renormings of) Hilbert spaces.
The following duality argument will be used over and over without further mention: if $x\in X$ and $x^*\in X^*$ are such that $\|x\|=\|x^*\|= 1$ and $\langle x^*,x\rangle =1$, then $E$ is inner (respectively, outer) at $x$  if and only if $E^*=\{y^*\in X^*: |\langle y^*, y\rangle |\leq 1\; \forall\, y\in E\}$ is outer (respectively, inner) at $x^*$.

\begin{lemma}\label{lem:delicate}
The unit sphere of any renorming of a Hilbert space has a dense set of points that admit inner ellipsoids as well as some point that admits an outer ellipsoid.
\end{lemma}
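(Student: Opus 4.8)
The plan is to read both clauses as statements about Euclidean norms that squeeze $\|\cdot\|$. Fix the underlying Euclidean norm $|\cdot|$, with inner product $\langle\,\cdot\mid\cdot\,\rangle$ and constants $0<a\le b$ so that $a|x|\le\|x\|\le b|x|$. A centered ellipsoid is the unit ball of a Euclidean norm $q$; it is inner at $x\in S$ precisely when $q\ge\|\cdot\|$ and $q(x)=\|x\|=1$, and outer at $x$ precisely when $q\le\|\cdot\|$ and $q(x)=1$. With this dictionary the outer half follows from the inner half applied to the dual: $X^*$ is again a renorming of a Hilbert space, and, being reflexive, every $x^*\in S^*$ attains its norm at some $x\in S$ with $\langle x^*,x\rangle=1$. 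Hence, once an inner ellipsoid $F$ has been produced at a single point $x^*\in S^*$, the duality remark preceding the lemma turns its polar $F^{\circ}$ into an outer ellipsoid at $x$. So I would concentrate entirely on the inner clause (and apply it both to $X$ and to $X^*$).

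For the inner clause the engine is an explicit construction turning a one‑sided second‑order estimate into an inscribed ellipsoid. Suppose $x_0\in S$, let $x_0^*\in S^*$ satisfy $\langle x_0^*,x_0\rangle=1$, and suppose there is a constant $C\ge b^2$ with
\[
\|x_0+h\|^2\le 1+2\langle x_0^*,h\rangle+C\,|h|^2\qquad\text{for all }h\in X .
\]
Then I would take the Euclidean norm
\[
q(y)^2=\langle x_0^*,y\rangle^2+C\,\bigl|\,y-\langle x_0^*,y\rangle\,x_0\,\bigr|^2 .
\]
This form is genuinely positive‑definite and $q(x_0)=1$. To see $q\ge\|\cdot\|$, decompose $y=t\,x_0+v$ with $t=\langle x_0^*,y\rangle$ and $v=y-t\,x_0\in\Ker x_0^*$, so that $q(y)^2=t^2+C|v|^2$; for $t\ne0$ homogeneity together with the estimate applied to $h=v/t$ gives $\|y\|^2=t^2\|x_0+v/t\|^2\le t^2+C|v|^2=q(y)^2$, and the case $t=0$ is covered by $C\ge b^2$. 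Thus $\{q\le1\}$ is an inner ellipsoid at $x_0$. Comparing second‑order terms shows the displayed estimate is also necessary (any inner $q_B$ is a smooth quadratic, so $\|x_0+h\|^2\le q_B(x_0+h)^2=1+2\langle x_0^*,h\rangle+\langle Bh,h\rangle$), so inner ellipsoids sit exactly at the points where $\frac12\|\cdot\|^2$ has a centered quadratic majorant touching from above; in particular such a point is automatically one of smoothness, with $x_0^*=J(x_0)$.

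Everything therefore reduces to proving that the set of $x_0\in S$ at which $\frac12\|\cdot\|^2$ admits such a quadratic upper bound — equivalently, at which the upper second difference $\limsup_{h\to0}\bigl(\|x_0+h\|^2+\|x_0-h\|^2-2\bigr)/|h|^2$ is finite — is dense in $S$. In finite dimensions this is immediate from Alexandrov's theorem: $\frac12\|\cdot\|^2$ is twice differentiable almost everywhere, hence on a dense set. The genuine difficulty, and the step I expect to fight with, is that the statement does not localise to finite dimensions: an inner ellipsoid must dominate $\|\cdot\|$ in all directions, and it is exactly the off‑diagonal coupling between a finite‑dimensional section and its orthogonal complement that a John‑type construction in a section cannot see — thickening a section's inscribed ellipsoid transversally generically pokes out of $B_X$, because the cross terms in $\|y_F+y_\perp\|$ are not captured by a block‑diagonal form.

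To overcome this I would try two routes. The first is a smooth variational principle: perturb $\frac12\|\cdot\|^2$ by a small multiple of $|\cdot|^2$ and a generic linear functional, and extract minimisers at which the full (not merely sectional) second‑order behaviour is controlled, then slide the contact point to within $\varepsilon$ of any prescribed $x_0$. The second is to run the finite‑dimensional Alexandrov estimates along an increasing chain of sections $F_n\uparrow X$ chosen through $x_0$, obtaining inscribed ellipsoids whose contact points approach $x_0$, and to pass to a weak‑operator limit of the associated forms $B_n$, using the reference inner product to keep the family $\{a^2\le B\le b^2\}$ weak‑operator compact. Either way, the delicate points are the non‑degeneracy of the limiting form and the survival of a genuine contact point on $S$ in the absence of norm compactness of the sphere; this is where I expect the real work of the lemma to lie, and it is presumably why the author calls it delicate.
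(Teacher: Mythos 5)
Your dictionary between ellipsoids and Euclidean norms, the reduction of the outer clause to the inner clause for $X^*$ via reflexivity and the duality remark, and the explicit construction $q(y)^2=\langle x_0^*,y\rangle^2+C\,|y-\langle x_0^*,y\rangle x_0|^2$ are exactly what the paper does (its norm $|\cdot|_b$ is your $q$ with $b^2=C$, and it verifies $\|\cdot\|\le|\cdot|_b$ by the same contradiction you make direct). Your observation that inner ellipsoids sit precisely at points where $\tfrac12\|\cdot\|^2$ admits a centered quadratic majorant --- Lipschitz-smooth points in Fabian's sense --- is also the paper's reduction. But the step you defer, density of such points in $S$, is the entire substance of the lemma, and as written your proposal does not prove it: Alexandrov only covers finite dimensions, and you offer two candidate strategies rather than an argument. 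The paper closes this gap in one line by citing Fabian's theorem \cite[Theorem 2.8]{fabian}, which says that every continuous convex function on a Hilbert space is Lipschitz-smooth on a dense set. Your first route is in substance the proof of that theorem and does work: fix the target point $x_0$ and $C$ large compared with the local Lipschitz constant of $\phi=\tfrac12\|\cdot\|^2$, note that $\phi-\tfrac{C}{2}|\cdot-x_0|^2$ drops strictly below its value at $x_0$ on the boundary of a small ball around $x_0$, and apply Stegall's (or the Borwein--Preiss) variational principle --- available because Hilbert space has the Radon--Nikod\'ym property --- to produce an arbitrarily small linear perturbation attaining its maximum at an interior point $x_1$ near $x_0$; expanding $|x-x_0|^2=|x-x_1|^2+2\langle x-x_1\,|\,x_1-x_0\rangle+|x_1-x_0|^2$ turns that maximality into the quadratic majorant at $x_1$, with the same constant $C$. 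Had you executed this, your proof would coincide with the paper's modulo replacing the citation of Fabian by its proof.

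Your second route, by contrast, should be discarded: the obstruction you yourself identify is fatal. Alexandrov points of the restrictions $\|\cdot\|\restriction F_n$ yield forms $B_n$ with no uniform upper bound (so the family need not stay in a weak-operator compact set $\{a^2\le B\le b^2\}$), the sectional inequality $\|y\|\le q_n(y)$ holds only for $y\in F_n$ and carries no information about the cross terms between $F_n$ and its complement, and the contact points $x_n$ converge at best weakly, so no genuine contact point on $S$ survives the limit. No weak-operator limiting device repairs this; the variational principle (equivalently, Fabian's theorem) is the correct and, as far as is known, necessary engine here.
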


\begin{proof}
The second part is a consequence of the first one which in turn can be proved in several ways. The proof that best suits our purposes is based on the following {\em delicacy} of Fabian. A function $\phi:X\longrightarrow\R$ is said to be Lipschitz-smooth at $x$ if there is $x^*\in X^*$ such that $\phi(x+y)-\phi(x)-\langle x^*, y\rangle =O(\|y\|^2)$. Note that the definition forces $x^*=d_x\phi$ in the Fr\'echet sense. Fabian proved in \cite[Theorem 2.8]{fabian} (a result implying) that every continuous convex function on a Hilbert space is Lipschitz-smooth on some dense subset. Therefore the set of Lipschitz-smooth points of the convex function $\|\cdot\|^2$ is dense in the unit sphere of $X$. Let us see that all those points admit inner ellipsoids.
Assume $x\in S$ is such that
\begin{equation}\label{eq:mmm}
\|x+y\|^2=1 +2\langle J(x), y\rangle+O(\|y\|^2).
\end{equation}
Let $|\cdot|$ be an equivalent, Euclidean norm on $X$. 
Given $0<b<\infty$, we define another Euclidean norm on $X$ as follows: if $z=tx+h$, with $t$ real and $h\in\ker J(x)$ we put
$$
|z|_b= \sqrt{t^2+ b^2|h|^2}.
$$
Let $E_b$ be the unit ball of $|\cdot|_b$; note that the larger $b$ is the smaller $E_b$ is and that $x$ belongs to the boundary of each $E_b$.
We claim that $\|\cdot\|\leq |\cdot|_b$ for sufficiently large $b$. Otherwise for each $n\in\N$ we can find $t_n\in \R$ and $h_n\in \ker J(x)$ such that $\|t_nx+h_n\|^2>t_n^2+n^2|h_n|^2$, which implies that $t_n\neq 0$ for large $n$. Dividing by $t_n$ we obtain a sequence $y_n\in \ker J(x)$ such that
$$
\|x+y_n\|^2>1+n^2|y_n|^2\implies y_n\longrightarrow 0,
$$
which seriously compromises (\ref{eq:mmm}).

We cannot help mentioning the following  more  direct proof for the second part: 
Aron, Finet and Werner proved in \cite[Theorem~1]{AFW} that the set of bilinear forms $a:X\times X\longrightarrow\R$ that attain their norms (there are $x_0,y_0\in B_X$ such that $|a(x,y)|\leq |a(x_0,y_0)|$ for $x,y\in B_X$) are dense in the space of all bilinear forms if $X$ has the Radon-Nikod\'ym property---something that all reflexive spaces {\em do}. It is very easy to see that if $a$ is symmetric and positive-definite and attains the norm, then it attains the norm on the diagonal. It quickly follows that if $X$ is isomorphic to a Hilbert space then there is an equivalent Euclidean norm $|\cdot|$ that attains its maximum at some $x\in B_X$ and we can assume that that maximum is 1. Clearly the ellipsoid $E=\{z\in X:|z|\leq 1\}$ is outer at $x$.
\end{proof}

\begin{prop}\label{prop:Aut1}
A renorming of a Hilbert space is semitransitive 
if and only if every point of the unit sphere admits both inner and outer ellipsoids. 
\end{prop}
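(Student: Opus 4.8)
The plan is to translate the ellipsoid conditions into comparisons between the given norm $\|\cdot\|$ and equivalent Euclidean norms, and then to realise contractive automorphisms as Hilbert-space isometries between two such Euclidean structures. Concretely, an ellipsoid tangent to $S$ at a point $x\in S$ is the unit ball of an equivalent Euclidean norm $|\cdot|$ with $|x|=1$; it is inner at $x$ precisely when $|\cdot|\geq\|\cdot\|$, and outer at $x$ precisely when $|\cdot|\leq\|\cdot\|$ (in both cases with $|x|=\|x\|=1$, i.e.\ with genuine tangency). Since $X$ is a renorming of a Hilbert space, every such $|\cdot|$ is equivalent to $\|\cdot\|$, hence complete, so $(X,|\cdot|)$ is a bona fide Hilbert space.

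For the ``if'' part I would argue directly. Fix $x,y\in S$. By hypothesis $x$ admits an outer ellipsoid and $y$ an inner one, so there are equivalent Euclidean norms $|\cdot|_a\leq\|\cdot\|$ and $|\cdot|_b\geq\|\cdot\|$ with $|x|_a=|y|_b=1$. As $(X,|\cdot|_a)$ and $(X,|\cdot|_b)$ are Hilbert spaces of the same Hilbert dimension and $x$, $y$ are unit vectors therein, there is a linear bijection $U$ of $X$ that is an isometry from $|\cdot|_a$ onto $|\cdot|_b$ with $Ux=y$ (send $x$ to $y$ and match the orthogonal complements, which have equal dimension). For every $z\in X$ one then has the chain
$$
\|Uz\|\leq |Uz|_b = |z|_a \leq \|z\|,
$$
so $U$ is contractive; being a linear automorphism of $X$ it lies in $\Aut_1(X)$, and $Ux=y$ yields semitransitivity. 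The point to get right is the pairing: coupling the outer ellipsoid at the \emph{source} with the inner ellipsoid at the \emph{target} is exactly what makes the two outer inequalities point the same way and forces $U$ to be a contraction rather than an expansion.

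For the ``only if'' part I would invoke Lemma~\ref{lem:delicate} to obtain at least one point $x_0\in S$ carrying an inner ellipsoid $E$ and a (possibly different) point $x_1\in S$ carrying an outer ellipsoid $F$, and then transport these along contractive automorphisms supplied by semitransitivity. Given an arbitrary $y\in S$, choose $T\in\Aut_1(X)$ with $Tx_0=y$: since $T$ is contractive, $T(E)\subseteq T(B_X)\subseteq B_X$, and $T(E)$ is tangent to $S$ at $y$ because $|T^{-1}y|_E=|x_0|_E=1$, so $T(E)$ is an inner ellipsoid at $y$. Dually, choosing $R\in\Aut_1(X)$ with $Ry=x_1$, contractivity gives $R(B_X)\subseteq B_X\subseteq F$, whence $B_X\subseteq R^{-1}(F)$, and $R^{-1}(F)$ is tangent to $S$ at $y$ because $|Ry|_F=|x_1|_F=1$; thus $R^{-1}(F)$ is an outer ellipsoid at $y$. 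Hence every point of $S$ admits both kinds of ellipsoid.

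The routine ingredients are the existence of the Euclidean isometry $U$ (immediate once one records that all norms involved are complete and equivalent, so the spaces are Hilbert spaces of equal dimension) and the tangency of the transported ellipsoids (read off from $|T^{-1}y|_E=1$ and $|Ry|_F=1$). I expect the genuine obstacle to be none of these computations but the structural observation in the ``if'' direction that an \emph{invertible} contraction is produced — not merely a rank-one contraction such as $z\mapsto\langle x^*,z\rangle\, y$, which already sends $x$ to $y$ but lies outside $\Aut_1(X)$ — and that this invertibility is forced precisely by feeding the construction an outer ellipsoid at $x$ together with an inner ellipsoid at $y$.
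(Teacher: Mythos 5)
Your proof is correct and follows essentially the same route as the paper: in the ``if'' direction the paper takes the very same automorphism (phrased as one mapping the outer ellipsoid at $x$ onto the inner ellipsoid at $y$, which is exactly your Euclidean isometry $U$ between $(X,|\cdot|_a)$ and $(X,|\cdot|_b)$, justified by transitivity of Hilbert spaces), and in the ``only if'' direction the paper likewise starts from the two points supplied by Lemma~\ref{lem:delicate} and transports the ellipsoids along contractive automorphisms via $T^{-1}(E)$ and $L(F)$, just as you do. Your tangency checks and the source/target pairing observation match the paper's argument point for point.
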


\begin{proof}
Pick $x,y\in S$. Let $E$ be an outer ellipsoid at $x$ and let $F$ be inner at $y$. If $T$ is an automorphism of $X$ mapping $E$ onto $F$ and such that $y=Tx$ (which exists because Hilbert spaces are transitive and isomorphic Hilbert spaces are isometric) it is clear that $T(B_X)\subset B_X$, that is, $T$ is contractive.

For the converse, by the preceding lemma there is some $x\in S$ admiting an outer ellipsoid $E$ and some $z\in S$ admitting an inner ellipsoid $F$. Given $y\in S$ let $T,L\in\Aut_1(X)$ such that $x=Ty, y=Lz$. Then  $T^{-1}(E)$ is outer at $y$ and $L(F)$ is inner at $x$.
\end{proof}

An obvious consequence of the proposition just proved  (and already remarked in \cite{CDKKLM} for UMST) is that $\ell_p^n$ cannot be ST unless $n=1$ or $p=2$: indeed if $1\le p<2$ the unit vectors do not admit inner ellipsoids; if $2<p\le \infty$ they do not admit outer ellipsoids. Thus, while arbitrary subspaces or quotients of ST renormings of Hilbert spaces are ST, 1-complemented subspaces of transitive spaces are not necessarily ST.

 This raises the question, implicit in \cite{BRP} and explicit in \cite{CDKKLM} (cf. the comment after Remark~2.4), of the existence of (non-Euclidean, finite-dimen\-sional or separable) ST spaces.
En route to the concrete examples let us give a simple sufficient condition for a point to admit inner and outer ellipsoids:

\begin{lemma}
Assume $X$ is isomorphic to a Hilbert space and let $\varphi(x)={1\over 2}\|x\|^2$. If $\varphi$ is twice Fr\'echet differentiable at $x\in S_X$ then $x$ admits inner ellipsoids. If besides $d^2_x\varphi$ is positive-definite, then $x$ admits outer ellipsoids.
\end{lemma}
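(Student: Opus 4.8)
The plan is to handle the two assertions separately: the first is immediate from Lemma~\ref{lem:delicate}, and the second I would prove by a direct construction paralleling its proof. Write $x^{*}=J(x)=d_{x}\varphi$, so that $\|x^{*}\|=\langle x^{*},x\rangle=1$, and set $Q=d_{x}^{2}\varphi$, a bounded symmetric bilinear form. Twice Fr\'echet differentiability at $x$ means
$$
\tfrac12\|x+h\|^{2}=\tfrac12+\langle x^{*},h\rangle+\tfrac12 Q(h,h)+o(\|h\|^{2}),
$$
with the error uniform in the direction of $h$ (this uniformity is exactly where Fr\'echet, rather than merely Gateaux, differentiability enters); convexity of $\varphi$ forces $Q(h,h)\ge0$.

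For the first statement, since $|Q(h,h)|\le\|Q\|\,\|h\|^{2}$ the expansion gives $\|x+h\|^{2}=1+2\langle x^{*},h\rangle+O(\|h\|^{2})$, which is precisely condition \eqref{eq:mmm}. Hence $x$ admits an inner ellipsoid by the argument already carried out in the proof of Lemma~\ref{lem:delicate}, and nothing further is required.

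For the second statement we have $Q(h,h)\ge c\|h\|^{2}$ for some $c>0$. Fix an equivalent Euclidean norm $|\cdot|$ with $m|z|\le\|z\|\le M|z|$ and, reusing the device of Lemma~\ref{lem:delicate}, put $|z|_{b}=\sqrt{t^{2}+b^{2}|h|^{2}}$ for $z=tx+h$ with $t\in\R$ and $h\in\ker J(x)$; let $E_{b}$ be its unit ball. The aim is to show that for all sufficiently small $b>0$ one has $|\cdot|_{b}\le\|\cdot\|$, i.e. $B_{X}\subset E_{b}$; since $|x|_{b}=1$, the ellipsoid $E_{b}$ is then outer at $x$ and we are done. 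By homogeneity this inequality reduces to the single estimate
$$
\|x+h\|^{2}\ge1+b^{2}|h|^{2}\qquad(h\in\ker J(x)),
$$
the degenerate case $t=0$ merely asking $b\le m$.

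Establishing this last estimate is the heart of the matter and the place where the main obstacle sits: as $\ker J(x)$ is not locally compact, one cannot obtain $\inf\{\|x+h\|^{2}-1:h\in\ker J(x),\,|h|=1\}>0$ by a compactness argument, and in principle $x$ might be approached by flat portions of the sphere lying ``at infinity''. Positive-definiteness of $Q$ rules this out, and I would exploit it through convexity. First, the expansion together with $Q(h,h)\ge c\|h\|^{2}$ gives a genuine second-order bound $\|x+h\|^{2}\ge1+\tfrac{c}{2}\|h\|^{2}$ for $\|h\|\le r_{0}$. Then, since $t\mapsto\|x+th\|^{2}-1$ is convex and vanishes at $t=0$, its difference quotients are nondecreasing, which propagates the local bound to a linear lower bound for $\|h\|\ge r_{0}$; combined with the coercive inequality $\|x+h\|^{2}\ge(\|h\|-1)^{2}$ valid for large $\|h\|$, this yields $\|x+h\|^{2}-1\ge\kappa|h|^{2}$ on all of $\ker J(x)$ for some $\kappa>0$, and one takes $b=\min(\sqrt{\kappa},m)$. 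Conceptually the same conclusion follows by duality: local positive-definiteness makes $\varphi$ strongly convex near $x$, hence $\varphi^{*}=\tfrac12\|\cdot\|_{*}^{2}$ is Lipschitz-smooth at $x^{*}$, so $x^{*}$ admits an inner ellipsoid in $X^{*}$ by the first part, and its polar is an outer ellipsoid at $x$ by the polarity remark preceding Lemma~\ref{lem:delicate}; but the direct route above avoids the conjugate-function machinery.
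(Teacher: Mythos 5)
Your proof is correct, and it follows the paper's own construction in outline: the first part is handled identically (twice Fr\'echet differentiable at $x$ implies Lipschitz-smooth at $x$, so the inner-ellipsoid argument from the proof of Lemma~\ref{lem:delicate} applies verbatim), and for the second part the paper also splits $z=tx+h$ along $\R x\oplus\ker J(x)$, introduces $|z|_b=\sqrt{t^2+b^2|h|^2}$, and claims $B_X\subset E_b$ for small $b$. Where you genuinely diverge is in the verification of that inclusion: the paper dismisses it in one line (``it is really easy to see \dots otherwise $d^2_x\varphi$ should not be positive-definite''), an implicit contradiction argument that is purely local, whereas a failing sequence $h_n\in\ker J(x)$ with $\|x+h_n\|^2-1<b_n^2|h_n|^2$ and $b_n\to0$ is merely bounded and need not converge to $0$, so in infinite dimensions positive-definiteness of the Hessian at the single point $x$ does not by itself finish the job. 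Your convexity-propagation step --- $t\mapsto\|x+tu\|^2-1$ is convex and vanishes at $0$, the uniform (Fr\'echet) quadratic lower bound $\tfrac{c}{2}t^2$ on $[0,r_0]$ forces at least linear growth for $t\ge r_0$, and together with $\|x+h\|\ge\|h\|-1$ this yields $\|x+h\|^2-1\ge\kappa|h|^2$ on all of $\ker J(x)$ --- supplies exactly the uniformity the paper's one-liner needs, so your writeup is, if anything, more complete than the original. The remaining difference (the paper takes $|\cdot|$ to be the Euclidean norm generated by $\tfrac12 d^2_x\varphi$ itself, making the second-order expansion exact, while you work with an arbitrary equivalent Euclidean norm) is immaterial.
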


\begin{proof}
The first part follows from the proof of Lemma~\ref{lem:delicate} just because ``twice Fr\'echet differentiable at $x$'' implies ``Lipschitz-smooth at $x$''.

Each symmetric, positive-definite bilinear form on $X$ defines (is) an inner product whose associated Euclidean norm is equivalent to the original norm.
The hypotheses yield
$$
\tfrac{1}{2}\|x+h\|^2= \tfrac{1}{2}+\langle J(x),h\rangle + \tfrac{1}{2}d^2_x\varphi(h,h)+ o\big(\|h\|^2\big).
$$
Let $|\cdot|$ be the Euclidean norm associated to  $\tfrac{1}{2}d^2_x\varphi$, that is, $|y|^2=\tfrac{1}{2}d^2_x\varphi(y,y)$. It is every easy to see that $\ker J(x)$ is orthogonal to $x$ and that $J(x)=\tfrac{1}{2}d_x|\cdot|^2$. It follows that
$$
\tfrac{1}{2}|x+h|^2= \tfrac{1}{2}+\langle J(x),h\rangle+ \tfrac{1}{2}d^2_x\varphi(h,h)
$$
(exactly!) for all $h\in X$. Given $0<b<\infty$, we define an Euclidean norm on $X$ as follows: if $z=tx+h$, with $t$ real and $h\in\ker J(x)$ we put
$$
|z|_b= |tx+bh|= \sqrt{t^2+ b^2|h|^2}.
$$
Let $E_b$ be the unit ball of $|\cdot|_b$. Since $|\cdot|$ is equivalent to $\|\cdot\|$ it is really easy to see that $B_X\subset E_b$ if $b$ is sufficiently small (otherwise $d^2_x\varphi$ should not be positive-definite).
\end{proof}

\subsection{Examples of analytic norms whose dual norm is analytic}\label{sec:ex}
Each continuous $\pi$-periodic function $g:\R\longrightarrow (0,\infty)$ defines a continuous {\em quasinorm} 
on $\R^2$ by the formula
$$
|v|_g={r}/{g(\theta)},
$$
where $(r,\theta)$ are the polar coordinates of $v$. (Don't worry if you don't know what quasinorms are.)
Such a $|\cdot|_g$ will be a norm if and only the set $B_g=\{v\in\R^2: |v|_g\leq 1\}= \{v\in\R^2: r\leq g(\theta)\}$ (the unit ball of $|\cdot|_g$)  is convex.
A simple, sufficient condition for the (strict) convexity of $B_g$ is that $g$ is twice differentiable and satisfies the inequality (a) in the next lemma. Note that $|\cdot|_g$ is Euclidean if and only if $g$ has the form
$$
g(\theta)=\frac{b}{\sqrt{1-e^2\cos^2(\theta-\theta_0)}}
$$
for some $b>0$ and $0<e\le 1$ (remember when you were taught curves in polar coordinates).
Now, we have:

\begin{prop} Let $g:\R\longrightarrow (0,\infty)$ be an analytic function of period $\pi$ such that:
\begin{itemize}
\item[(a)] $(g')^2+gg''<g$;
\item[(b)] $g''g-4(g')^2+g^2>0$.
\end{itemize}
Then $|\cdot|_g:\R^2\longrightarrow\R$ is an analytic norm and so is the dual norm.
\end{prop}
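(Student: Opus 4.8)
The plan is to prove the two assertions separately, deducing analyticity of the dual norm from that of $|\cdot|_g$ by realising the dual ball again as a body of the same type $B_{g^*}$. For the primal norm analyticity is almost free: away from the origin the polar data $r=\sqrt{x^2+y^2}$ and $\theta$ depend (locally) analytically on $v=(x,y)\in\R^2\setminus\{0\}$, and $g$ is analytic and strictly positive, so the quotient $|v|_g=r/g(\theta)$ is analytic on $\R^2\setminus\{0\}$ as a composition of analytic maps. The only extra ingredient is that $|\cdot|_g$ be a genuine norm, i.e. that $B_g$ be convex, which is exactly what hypothesis (a) provides (the text already isolates it as a sufficient condition for the strict convexity of $B_g$, equivalently for positivity of the curvature quantity $g^2+2(g')^2-gg''$). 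Thus (a) promotes the analytic function $|\cdot|_g$ to an analytic norm.

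For the dual norm I would show that $B_g^{*}$ is itself of the form $B_{g^{*}}$ for an analytic, positive, $\pi$-periodic radial function $g^{*}$, after which the first step applied to $g^{*}$ finishes the proof. I would parametrise $\partial B_g$ by the analytic curve $\gamma(\theta)=g(\theta)(\cos\theta,\sin\theta)$, compute the (unnormalised) outer normal $N(\theta)=(g'\sin\theta+g\cos\theta,\,g\sin\theta-g'\cos\theta)$, and record the clean identities $\langle\gamma,N\rangle=g^{2}$ and $|N|^{2}=g^{2}+(g')^{2}$. The point of $\partial B_g^{*}$ dual to the supporting line at $\gamma(\theta)$ is then $\gamma^{*}(\theta)=N(\theta)/g(\theta)^{2}$, which is visibly analytic in $\theta$.

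The crux is to convert this $\theta$-parametrisation into a radial function of the dual polar angle $\psi(\theta)=\arg N(\theta)$. Here I would compute $\psi'(\theta)=(N\times N')/|N|^{2}=(g^{2}+2(g')^{2}-gg'')/(g^{2}+(g')^{2})$, where $N\times N'=N_1N_2'-N_2N_1'$; the strict convexity secured by (a) makes the numerator strictly positive, so $\psi$ is an analytic diffeomorphism of the circle that can be inverted analytically by the real-analytic inverse function theorem. Setting $g^{*}(\psi)=|N(\theta(\psi))|/g(\theta(\psi))^{2}$ then yields the desired analytic, positive, $\pi$-periodic radial function for $B_g^{*}$. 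To invoke the first step for $g^{*}$ I still need $B_g^{*}$ to be strictly convex, i.e. $g^{*}$ to satisfy condition (a) in its own variable; I expect hypothesis (b) to be precisely the translation of this convexity certificate for $g^{*}$ back into $g$ (a point one can cross-check on the Euclidean case, where both (a) and (b) degenerate to $b^{2}>0$). Concluding that $|\cdot|_{g^{*}}$ is an analytic norm is exactly the claim that the dual norm is analytic.

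The step I expect to be the main obstacle is this last reparametrisation: showing that the Gauss map $\theta\mapsto\psi$ is an analytic diffeomorphism, so that $g^{*}$ is a bona fide analytic radial function, and carefully matching the two explicit inequalities to the strict convexity of $B_g$ and of its polar $B_g^{*}$ respectively (the differential-geometric computation above in fact ties the convexity of the polar to the very same positivity, so verifying that (b) is the right sufficient form is the delicate bookkeeping). Everything else is the routine computation of the polar-coordinate norm and its dual.
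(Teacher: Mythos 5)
Your proposal is correct in substance, and it takes a genuinely different route from the paper's. For the primal norm the two arguments coincide (analyticity off the origin by composition, convexity of $B_g$ from (a)). For the dual norm, however, the paper works on the plane: it notes that the duality map $J=\tfrac{1}{2}d|\cdot|_g^2$ is analytic off $0$, computes the Hessian of $\tfrac{1}{2}r^2/g(\theta)^2$ in polar coordinates, uses (b) to get $\det dJ>0$, inverts $J$ by the analytic inverse function theorem, and identifies $J^{-1}$ with the differential of $\tfrac{1}{2}(|\cdot|_g^*)^2$. You instead work on the circle, inverting the Gauss map $\theta\mapsto\psi=\arg N(\theta)$; your identities $\langle\gamma,N\rangle=g^2$, $|N|^2=g^2+(g')^2$ and $\psi'=\bigl(g^2+2(g')^2-gg''\bigr)/\bigl(g^2+(g')^2\bigr)$ are all correct, and they produce the dual radial function $g^*$ explicitly. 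Your route is more elementary (a one-variable inverse function theorem instead of the planar one) and more geometric; the paper's buys the slick identification of $J^{-1}$ as the differential of the dual norm squared, with no reparametrisation needed.

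Two remarks. First, the obstacle you flag at the end is vacuous: $B_g^*$ is the polar of a convex body, hence automatically convex, so $|\cdot|_{g^*}$ is the dual norm a priori and no (a)-type certificate for $g^*$ is needed; once $g^*$ is analytic and positive, the same composition argument you used for the primal already gives analyticity of the dual norm. Deleting that step closes your proof, and note that it then uses only (a): reading (a) as $(g')^2+gg''<g^2$, one gets $g^2+2(g')^2-gg''>3(g')^2$, with strict positivity also at points where $g'=0$, so $\psi'>0$ everywhere. Second, your computation is a useful cross-check on the paper's: $\psi'$ and $\det H$ measure the nondegeneracy of the same map, so the same quantity must govern both. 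Since $\partial_\theta^2(g^{-2})=6(g')^2g^{-4}-2g''g^{-3}$, the $\theta\theta$-term in the $(2,2)$-entry of the paper's $H$ appears with the opposite sign, and the corrected determinant is proportional to the curvature quantity $g^2+2(g')^2-gg''$ of your formula rather than to the expression in (b). (Test on $\ell_4^2$ at the point $(1,0)$: there $g=g''=1$ and $g'=0$, the true Hessian of $\tfrac{1}{2}\|\cdot\|_4^2$ is singular and the curvature quantity vanishes, while the (b)-quantity equals $2$.) So your argument not only proves the proposition but shows that strict positivity of the curvature, already guaranteed by (a), is the operative hypothesis for the analyticity of the dual norm.
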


\begin{proof}
To prove that $|\cdot|_g$ is a norm it suffices to see that the region enclosed by the curve $S_g=\{v\in\R^2: r=g(\theta), \theta\in\R\}$ is convex.

If we fix $\theta$, the equation of the tangent line to $S_g$ at the point whose polar coordinates are $(g(\theta), \theta)$ is 
\begin{equation*}
r_{\text{tan}}(\theta+ \Delta\theta)=\sqrt{g(\theta)^2+2\,\Delta\theta\, g(\theta)\,g'(\theta)+  2(\Delta\theta\, g'(\theta))^2}
\end{equation*}
for $|\Delta\theta|<\pi/2$, while the equation of $S_g$ can be written, using the second-degree Taylor polynomial of $g$ at $\theta$, as
\begin{equation*}
r= g(\theta+ \Delta\theta)=g(\theta) + g'(\theta) \Delta\theta+ \frac{g''(\theta)}{2}(\Delta\theta)^2+  o\big( (\Delta\theta)^2 \big)
\end{equation*}
If (a) holds, a simple inspection reveals that $g(\theta+ \Delta\theta)\le r_{\text{tan}}(\theta+ \Delta\theta)$ for all $\theta$ if $|\Delta\theta|$ is sufficiently small from where the convexity of $B_g$ follows.

Once we know that $|\cdot|_g$ is a norm, it is clear that it is real analytic off 0 and so is the duality map $J={1\over 2}d|\cdot|_g^2$. As the reader may guess the proof of the analytic character of the dual norm $|\cdot|_g^*$ is indirect.

Applying twice the chain rule one easily obtains
$$
\tfrac{1}{2}d_P^2|\cdot|_g^2(J(u),J(v))
= \tfrac{1}{2} u^{\text{tr}}\underbrace{\left(\begin{matrix}
\frac{\partial^2}{\partial r^2}\frac{r^2}{g(\theta)^2} & \left(\frac{\partial^2}{\partial r \partial \theta} -\frac{1}{r}\frac{\partial f}{\partial \theta}\right)\frac{r^2}{g(\theta)^2}
\\
\text{\small{symmetric}} &\left(\frac{\partial^2}{\partial \theta^2}+r \frac{\partial f}{\partial r}\right)\frac{r^2}{g(\theta)^2}
\end{matrix}\right)}_{H} v,
$$
where $P,u,v\in\R^2$ and $u^{\text{tr}}$ is the transpose of $u$ and the partial derivatives are evaluated at $P$ (or at the polar coordinates of $P$, depending on the interpretation one prefers); see Massicot \cite{mass} for a crystal clear explanation. Differentiating,
$$
 H=  2 \left(\begin{matrix}
\frac{1}{g^2} & \frac{-rg'}{g^3}
\\
 \frac{-rg'}{g^3} & {r^2}\left(\frac{g''}{g^3}-\frac{3(g')^2}{g^4}\right)+\frac{r^2}{g^2}
\end{matrix}\right).
$$
Hence
$$\det  H
=
\frac{4}{g^4}\left|\begin{matrix}
1 &   \frac{-rg'}{g}
\\
 \frac{-rg'}{g}  & r^2\left( \frac{g''}{g} - \frac{3(g')^2}{g^2}+1 	\right)
\end{matrix}\right| = 
\frac{4 r^2}{g^4}\left(  \frac{g''}{g} - \frac{4(g')^2}{g^2}+1	\right)
$$
which is strictly positive if (b) holds.

The inexorable conclusion is that $dJ$ is non-singular at any $P\neq 0$ and the inverse function theorem for analytic functions shows that $J^{-1}$ is analytic off the origin. But $J^{-1}$ is the differential of ${1\over 2}(|\cdot|^*_g)^2$ and so $|\cdot|_g^*$ is real analytic off the origin.
\end{proof}

The proof actually shows that conditions (a) and (b) together characterize those norms on the real plane which are analytic and have analytic dual norm. 
Note that any slowly varying $g$ such that $g(0)=1$ satisfies the hypotheses of the proposition, for instance
$
g(\theta) = 1+\epsilon\sin^k(n\theta)
$
for arbitrary $k,n\in\mathbb N$ and $\epsilon>0$ small enough. Needless to say, the corresponding norms are ST but not Euclidean. Proceeding more carefully one can construct non-Euclidean norms which satisfy the $(\eps,\delta)$-property; see \cite[Definition 2.1]{BRP} if you are in the mood for it. It is unclear, however, if this simple procedure can give USMT norms since our {technique} to check ST is a bit palaeolithic.

\subsection{(Almost) Transitive norms on Hilbert spaces}
This section deals with the isometric part of Mazur's problem. 
Our main result is:

\begin{theorem}\label{th:AThilbert}
Let $X$ be an almost transitive renorming of a Hilbert space. Then the duality map is (correctly defined and) a Lipschitz equivalence between $X$ and $X^*$.
\end{theorem}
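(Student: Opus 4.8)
The plan is to show that $X$ is simultaneously uniformly smooth and uniformly convex with moduli of power type $2$, and then read off the bi-Lipschitz behaviour of $J$ from two-sided quadratic estimates of $\tfrac12\|\cdot\|^2$. Since $X$ is a renorming of a Hilbert space it is reflexive and carries a reference Euclidean norm $|\cdot|$, so all the equivalent Euclidean norms that appear below are genuinely comparable to $\|\cdot\|$.

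First I would use Lemma~\ref{lem:delicate} to fix one point $x_1\in S$ admitting an inner ellipsoid and one point $x_0\in S$ admitting an outer ellipsoid. Writing these ellipsoids as unit balls of Euclidean norms $q_1\ge\|\cdot\|$ (touching at $x_1$) and $q^0\le\|\cdot\|$ (touching at $x_0$), equivalence of norms yields a single constant $\mu\ge1$ with $\|\cdot\|\le q_1\le\mu\|\cdot\|$ and $\mu^{-1}\|\cdot\|\le q^0\le\|\cdot\|$. The key point is that almost transitivity upgrades these isolated data to dense, uniformly shaped families: any $T\in\Iso(X)$ carries an inner (respectively outer) ellipsoid at $x$ to one at $Tx$ with exactly the same shape, because $T$ preserves $\|\cdot\|$, so $q_1\circ T^{-1}$ and $q^0\circ T^{-1}$ obey the same inequalities with the same $\mu$. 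By Definition~\ref{def:AT} the orbits $\{Tx_1\}$ and $\{Tx_0\}$ are dense in $S$, so we obtain a dense set $D_{\mathrm{in}}$ of points carrying inner ellipsoids and a dense set $D_{\mathrm{out}}$ carrying outer ellipsoids, all of shape controlled by $\mu$.

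Next I would convert these families into analytic estimates. For $w\in D_{\mathrm{in}}$, with associated Euclidean norm $q_w$ (so $q_w(w)=1$), the parallelogram law gives $\|w+\tau u\|+\|w-\tau u\|\le q_w(w+\tau u)+q_w(w-\tau u)\le 2\sqrt{1+\mu^2\tau^2}$ for $u\in S$; by density and continuity of the norm this bounds the modulus of smoothness by $\tfrac12\mu^2\tau^2$, so the norm is uniformly smooth. In particular $X$ is Fr\'echet smooth off zero, $J$ is well defined on $X\setminus\{0\}$ and norm-to-norm continuous. With $J$ in hand I would return to the pointwise estimates: at $w\in D_{\mathrm{in}}$ the inequality $\|w+h\|^2\le q_w(w+h)^2$, together with smoothness (which forces the linear parts to agree, so the gradient of $\tfrac12 q_w^2$ at $w$ is $J(w)$), gives $\|w+h\|^2\le 1+2\langle J(w),h\rangle+\mu^2\|h\|^2$; dually, at $w\in D_{\mathrm{out}}$ the inequality $\|w+h\|\ge q^w(w+h)$ gives $\|w+h\|^2\ge 1+2\langle J(w),h\rangle+\mu^{-2}\|h\|^2$. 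Using the continuity of $J$, both estimates extend from their respective dense sets to every $x\in S$, and then to all $x\neq0$ via the $1$-homogeneity of $J$ and $2$-homogeneity of $\tfrac12\|\cdot\|^2$ (Lemma~\ref{lem:k-homog}), yielding $\|x+h\|^2\le\|x\|^2+2\langle J(x),h\rangle+\mu^2\|h\|^2$ and the matching lower bound globally.

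Finally I would harvest the conclusion by convex analysis. The two-sided bound says that $\varphi=\tfrac12\|\cdot\|^2$ has $\mu^2$-Lipschitz gradient and is $\mu^{-2}$-strongly convex; hence $\|J(x)-J(y)\|\le\mu^2\|x-y\|$, while $\langle J(x)-J(y),x-y\rangle\ge\mu^{-2}\|x-y\|^2$ forces $\|J(x)-J(y)\|\ge\mu^{-2}\|x-y\|$ by Cauchy--Schwarz, so $J$ is injective and co-Lipschitz. Surjectivity of $J$ onto $X^*$ follows from reflexivity, since for each $f\in X^*$ the coercive convex function $\varphi-f$ attains its minimum at a point $x$ with $J(x)=f$. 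Thus $J$ is a bijection that is $\mu^2$-Lipschitz with $\mu^2$-Lipschitz inverse, i.e.\ a Lipschitz equivalence $X\to X^*$. I expect the main obstacle to be the second paragraph: recognising that almost transitivity is exactly what turns the single outer ellipsoid supplied by Lemma~\ref{lem:delicate} into a dense family of uniform shape is the crux, and one must be careful that $D_{\mathrm{in}}$ and $D_{\mathrm{out}}$ are a priori different dense sets, so the passage from pointwise to global estimates genuinely needs the continuity of $J$ secured first by uniform smoothness.
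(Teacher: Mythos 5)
Your proposal is correct, but it takes a genuinely different route from the paper's. The paper's argument is analytic and runs through the dual: it takes one Lipschitz-smooth point of $\tfrac{1}{2}\|\cdot\|^2$ (Fabian) and one Lipschitz-exposed point of the ball (Fabian again), uses almost transitivity together with the uniform continuity of $J$ to spread the resulting one-sided estimates into a uniform two-sided bracketing $k\,(1-\langle J(x),y\rangle)\leq\|x-y\|^2\leq K\,(1-\langle J(x),y\rangle)$ on the sphere, repeats this in $X^*$ (which is again an AT renorming of a Hilbert space), and then chains the primal and dual sphere estimates to get $J$ Lipschitz on $S$, finishing with the last part of Lemma~\ref{lem:k-homog}; the Lipschitz bound for $J^{-1}$ is obtained by running the same argument in $X^*$. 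You instead work with ellipsoids: you transport the inner and outer ellipsoids supplied by Lemma~\ref{lem:delicate} along the dense isometry orbits --- correctly observing that isometries preserve the shape constant $\mu$ \emph{exactly} --- derive global two-sided quadratic bounds on $\tfrac{1}{2}\|\cdot\|^2$, and close with standard convex analysis (the descent-lemma converse for the Lipschitz bound on $J$, strong monotonicity for the co-Lipschitz bound, coercivity plus reflexivity for surjectivity). This is essentially the ``simpler, longer, geometric proof'' the paper alludes to via Proposition~\ref{prop:inner/outer}, but streamlined in two respects: you need ellipsoids only on dense subsets of $S$, since the continuity of $J$ --- which you rightly secure first through the uniform-smoothness estimate --- handles the closure, so you avoid the ultraproduct argument that places ellipsoids at \emph{every} point of the sphere; and your convex-analytic finish makes bijectivity and the Lipschitz inverse of $J$ explicit, whereas the paper needs the (true but nontrivial) fact that $X^*$ is again an AT renorming of a Hilbert space, a step your argument bypasses entirely. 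What the paper's route buys is a short proof staying on the sphere and tied to Fabian's exposedness results; what yours buys is self-containedness on the primal side and global second-order estimates $\|x\|^2+2\langle J(x),h\rangle+\mu^{-2}\|h\|^2\leq\|x+h\|^2\leq\|x\|^2+2\langle J(x),h\rangle+\mu^{2}\|h\|^2$, which are of independent interest. All the individual steps you flag as delicate (the gradient of $\tfrac{1}{2}q_w^2$ at the tangency point equals $J(w)$; the dense sets $D_{\mathrm{in}}$ and $D_{\mathrm{out}}$ being a priori different) are handled correctly.
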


To put the theorem in perspective observe that if $X$ is super-reflexive and AT, then it is uniformly convex and uniformly smooth and $X^*$ is AT in the dual norm. It follows that $J$ is a uniform homeomorphism between the spheres of $X$ and $X^*$ which in  general does not extend to a uniform homeomorphism between $X$ and $X^*$ (just consider $X=L_p$ for some $p\in(1,\infty)$ different from 2). Any Banach space which satisfies the conclusion of the theorem is isomorphic to a Hilbert space, as it follows from the already mentioned \cite[Chapter V, Corollary 3.6]{DGZ}. 


Throughout the section $X$ will denote a Banach space isomorphic to a Hilbert space. We denote by $\|\cdot\|$ the norm of $X$ and by $B$ and $S$ the unit ball and unit sphere of $\|\cdot\|$, respectively.

\begin{proof}[Proof of Theorem~\ref{th:AThilbert}]
Let $x\in S$ be a point of Lipschitz-smoothness of the convex function $\tfrac{1}{2}\|\cdot\|^2$. Then there are $C,\delta>0$ such that if  for every $y\in S$ with $\|y-x\|\leq\delta$ one has
\begin{equation*}
1- \langle J(x), y\rangle\leq |\langle J(x), x-y\rangle|\leq C\|x-y\|^2.
\end{equation*}
Using the AT character of the norm and the continuity of $J$ we see that the preceding inequality holds (with the same $C,\delta$) for every $x\in S$. On the other hand, if we apply the same result to $X^*$, then \cite[Proposition 2.2]{fabian} tell us that $B$ must be Lipschitz exposed at some $x\in S$, necessarily by $J(x)$. This means that for some $c>0$ and all $y\in B$ one has
$
c\|x-y\|^2\leq 1- \langle J(x), y\rangle
$. Applying the same argument as before and relabelling the constants we conclude that there exist $k,K,\delta>0$ such that, for $x,y\in S$,
$$
\|x-y\|\leq\delta \implies k(1- \langle J(x), y\rangle )\leq \|x-y\|^2\le  K(1- \langle J(x), y\rangle).
$$
Since $X^*$ is also an AT renorming of a Hilbert space there exist in particular
$K^*,\delta^*>0$ such that, for $x^*, y^*\in S^*$,
$$
\|x^*-y^*\|\leq\delta^* \implies 
\|x^*-y^*\|^2\leq  K^*(1- \langle J^{-1}(x^*), y^*\rangle).
$$
Recalling that  $J:S\longrightarrow S^*$ is uniformly continuous we can choose $\eps\leq \delta$ such that  $x,y\in S, \|x-y\|\leq \eps\implies \|J(x)-J(y)\|\leq \delta^*$. Now, for  $\|x-y\|\leq \eps$,   taking $x^*=J(x), y^*=J(y)$ we get
$$
\|y^*-x^*\|^2\leq K^* (1- \langle J^{-1}(y^*), x^*\rangle)=
 K^* (1- \langle  J(x), y\rangle)
\leq \frac{K^*}{k}\|x-y\|^2.
$$
If follows that $J$ is Lipschitz on $S$ and the proof concludes just applying the last part of Lemma~\ref{lem:k-homog}.
\end{proof}

Time for applications.

\begin{cor}\label{cor:2Gateaux}
Let $X$ be a renorming of a Hilbert space.
\begin{itemize}
\item[(a)] If $X$ is separable and transitive, the norms of $X$ and $X^*$ are twice G\^ateaux differentiable.
\item[(b)] If $X$ is micro-transitive, the norms of $X$ and $X^*$ are $C^2$-smooth.
\item[(c)] If $X$ is Lipschitz-transitive, the norms of $X$ and $X^*$ are analytic.
\end{itemize}
\end{cor}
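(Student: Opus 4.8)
The plan is to read all three parts off Theorem~\ref{th:AThilbert} together with the differentiation machinery $(\dag)$ and $(\ddag)$ and the \emph{equivariance} of the duality map. The two structural facts I would lean on are: for $T\in\Iso(X)$ one has $J(Tx)=(T^{-1})^*J(x)$ (a restatement of Lemma~\ref{lem:Lip(J)}), so that differentiating at any point of weak* differentiability gives $d_{Tx}J=(T^{-1})^*\,(d_xJ)\,T^{-1}$; and, under the hypotheses, $J\colon X\to X^*$ is a genuine Lipschitz equivalence, so that both $J$ and $J^{-1}$ are Lipschitz. Since weak* differentiability of $J$ at $x$ is exactly twice G\^ateaux differentiability of $\tfrac12\|\cdot\|^2$ at $x$, and $J$ is $1$-homogeneous (so $dJ$ is $0$-homogeneous), everything reduces to controlling $dJ$ on $S$ and transferring it to the dual.

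For (a), transitivity of $X$ gives that the subgroup $\{(T^{-1})^*:T\in\Iso(X)\}$ of $\Iso(X^*)$ is transitive on $S^*$, because $J$ is a bijection $S\to S^*$ and intertwines the two actions; hence $X^*$ is again a separable transitive renorming of a Hilbert space and it suffices to treat $X$. As $X$ is AT, Theorem~\ref{th:AThilbert} makes $J$ Lipschitz, so $(\dag)$ (here $X$ is separable) yields weak* differentiability of $J$ on a dense subset of $X$. The formula for $d_{Tx}J$ together with transitivity propagates this to every $x\neq 0$; thus $\tfrac12\|\cdot\|^2$ is twice G\^ateaux differentiable off the origin, and the same holds for $X^*$.

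For (b), microtransitivity forces transitivity, so (a) already gives weak* differentiability of $J$ everywhere (after the usual reduction to separable subspaces, $C^2$-smoothness being separably determined and microtransitivity passing to suitable separable subspaces as in the reduction used for Theorem~\ref{th:anal}). The new ingredient is uniform control: given $x_1\to x_0$ on $S$ we may now choose the connecting isometries $T$ with $\|T-{\bf I}\|\to0$, whence $d_{x_1}J=(T^{-1})^*(d_{x_0}J)T^{-1}\to d_{x_0}J$ in operator norm. So $x\mapsto d_xJ$ is norm continuous, and it is uniformly bounded on annuli because it is $0$-homogeneous and $J$ is Lipschitz; applying $(\ddag)$ on each annulus shows that $d_xJ$ is a Fr\'echet differential, so $J\in C^1$ and the norm is $C^2$-smooth. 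Finally microtransitivity is self-dual: if $x^*=J(x),\,y^*=J(y)$ are close then $x,y$ are close (as $J^{-1}$ is Lipschitz), an isometry $T$ with $Tx=y$ and $\|T-{\bf I}\|$ small yields $(T^{-1})^*x^*=y^*$ with $\|(T^{-1})^*-{\bf I}\|\le\|T-{\bf I}\|$ small, so the argument applies verbatim to $X^*$.

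For (c) the norm of $X$ is analytic directly by Theorem~\ref{th:anal}; the point is the dual norm. I would show that $X^*$ is itself Lipschitz-transitive: Lemma~\ref{lem:Lip(J)} makes $J$ $\Lambda$-Lipschitz, Theorem~\ref{th:AThilbert} makes $J^{-1}$ Lipschitz, and for close $x^*=J(x),\,y^*=J(y)$ the isometry $(T^{-1})^*$ arising from an LT isometry $T$ of $X$ satisfies $\|(T^{-1})^*-{\bf I}\|\le\|T-{\bf I}\|\le\Lambda\|x-y\|\le\Lambda\,\mathrm{Lip}(J^{-1})\,\|x^*-y^*\|$; thus $X^*$ is LT and Theorem~\ref{th:anal} makes the dual norm analytic. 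The main obstacle throughout is not the local differentiation theory but the transfer of regularity (and of transitivity itself) to the dual side and the passage from a dense set of good points to \emph{all} points: the former rests squarely on the bi-Lipschitz nature of $J$ supplied by Theorem~\ref{th:AThilbert} (losing the lower Lipschitz bound on $J^{-1}$ would break every dual statement), while upgrading ``dense'' to ``everywhere'' uses transitivity in (a) and the stronger uniform openness of microtransitivity in (b).
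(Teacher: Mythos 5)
Your proof is correct and takes essentially the same route as the paper: parts (a) and (b) are precisely the intended reading of the paper's terse ``clear from Theorem~\ref{th:AThilbert}'' plus the Lemma~\ref{lem:Coo}-style machinery (Lipschitz $J$, then $(\dag)$ on a dense set, the equivariance $J\circ T=(T^*)^{-1}\circ J$ with transitivity, respectively microtransitivity, to propagate and to get norm continuity of $x\mapsto d_xJ$, then $(\ddag)$), and your dualization in (c) is verbatim the paper's argument, merely phrased with $(T^{-1})^*$ where the paper uses $T^*$. The details you supply that the paper leaves implicit --- that $X^*$ is itself transitive because $J$ is a bijection of spheres intertwining the actions, and that MT is self-dual via the Lipschitz inverse $J^{-1}$ --- are correct fill-ins rather than a different approach.
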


\begin{proof}
(a) is clear from Theorem~\ref{th:AThilbert}. (b) One can assume that $X$ is separable; cf. the paragraph between Theorem~\ref{th:anal} and Lemma~\ref{lem:k-homog}. Reasoning as in the proof of Lemma~\ref{lem:Coo} it is very easy to see that the Gateaux differential $x\mapsto d^2_x\|\cdot\|\in \mathscr L(^2 X)$ is  bounded and (uniformly) continuous on $S$, hence locally bounded and continuous on $X\backslash\{0\}$. According to ($\ddagger$) this implies that $d^2_x\|\cdot\|$ are actually Fr\'echet differentials. (c) In view of Theorem~\ref{th:anal} it suffices to see that $X^*$ is LT too. Which is obvious from Theorem~\ref{th:AThilbert}: let $\Lambda$ be the LT constant of $X$ and let $L$ be the Lipschitz constant of $J^{-1}:S^*\longrightarrow S$. Pick $x^*,y^*\in S^*$ and let $x=J^{-1}(x), y=J^{-1}(y^*)$, so that $\|x-y\|\leq L\|x^*-y^*\|$. Take $T\in\Iso(X)$ such that $x=Ty$ and $\|T-{\bf I}_X\|\leq \Lambda\|x-y\|$. Then $y^*=T^*x^*$ and $\|T^*-{\bf I}_{X^*}\|=\|T-{\bf I}_X\|\leq \Lambda\|x-y\|\leq \Lambda  L\|x^*-y^*\|$.
\end{proof}

One can prove that if $Y$ is a finite-dimensional 1-complemented subspace of a LT space, then $Y$ {\em and} $Y^*$ have analytic norms. 
Thus, in spite of our efforts, some of the (two-dimensional) spaces
appearing at the end of Section~\ref{sec:ex} might be 1-complemented in a LT space which could even be a renorming of a Hilbert space.

We close with the following remark that can be used to get a simpler,  longer, geometric proof of Theorem~\ref{th:AThilbert} and has
some intrinsic interest.
 
\begin{prop}\label{prop:inner/outer}
Every point of the unit sphere of an almost transitive renorming of a Hilbert space admits inner and outer ellipsoids.
\end{prop}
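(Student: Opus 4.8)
The plan is to promote the ``dense set'' and ``some point'' supplied by Lemma~\ref{lem:delicate} to ``every point'' by exploiting the isometry group. Recall that a point $x\in S$ admits an inner (resp. outer) ellipsoid precisely when there is an equivalent Euclidean norm $|\cdot|$ with $\|\cdot\|\le|\cdot|$ (resp. $|\cdot|\le\|\cdot\|$) and $|x|=\|x\|=1$. The crucial observation is that both classes of points are invariant under $\Iso(X)$: if $E=\{|\cdot|\le 1\}$ is inner (resp. outer) at $x$ and $T\in\Iso(X)$, then $TE$ is an ellipsoid that is inner (resp. outer) at $Tx$, since $T$ maps $B_X$ onto $B_X$ and ellipsoids to ellipsoids. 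I will treat the inner case in detail; the outer case is entirely analogous after reversing all inequalities (alternatively it follows from the inner case applied to $X^*$ together with the duality remark preceding Lemma~\ref{lem:delicate}, using that $X^*$ is again an AT renorming of a Hilbert space, as in the proof of Theorem~\ref{th:AThilbert}).

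First I fix, using Lemma~\ref{lem:delicate}, a single point $x_*\in S$ carrying an inner ellipsoid with Euclidean norm $|\cdot|_*$; since $|\cdot|_*$ is equivalent to $\|\cdot\|$ there is $c\ge 1$ with $\|y\|\le|y|_*\le c\|y\|$ for all $y$. The key is that every isometric translate keeps the same eccentricity: for $T\in\Iso(X)$ the norm $|y|_T:=|T^{-1}y|_*$ defines the inner ellipsoid $TE_*$ at $Tx_*$ and, because $\|T^{-1}y\|=\|y\|$, it again satisfies $\|y\|\le|y|_T\le c\|y\|$ with the \emph{same} $c$. Now, given an arbitrary target $x_0\in S$, almost transitivity (Definition~\ref{def:AT}) furnishes, for each $\eps>0$, an isometry $T_\eps$ with $\|x_0-T_\eps x_*\|\le\eps$; thus the points $y_\eps:=T_\eps x_*$ converge to $x_0$ and each carries an inner ellipsoid whose squared norm $q_\eps:=|\cdot|_{T_\eps}^2$ obeys $\|y\|^2\le q_\eps(y,y)\le c^2\|y\|^2$ and $q_\eps(y_\eps,y_\eps)=1$.

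It remains to let $\eps\to 0$. The forms $q_\eps$ all lie in the set $K=\{q\in\mathscr L(^2X): q\text{ symmetric and }\|y\|^2\le q(y,y)\le c^2\|y\|^2\ \forall y\}$, which is bounded and weak* closed (each defining condition is pointwise, hence weak* closed), so it is weak* compact by Banach--Alaoglu. Passing to a weak*-convergent subnet $q_{\eps_\alpha}\to q\in K$ with $\eps_\alpha\to 0$, so that $y_{\eps_\alpha}\to x_0$, the contact value survives the limit: writing $q_{\eps_\alpha}(y_{\eps_\alpha},y_{\eps_\alpha})-q(x_0,x_0)$ as $[q_{\eps_\alpha}(y_{\eps_\alpha},y_{\eps_\alpha})-q_{\eps_\alpha}(x_0,x_0)]+[q_{\eps_\alpha}(x_0,x_0)-q(x_0,x_0)]$, the first bracket is $O(\|y_{\eps_\alpha}-x_0\|)$ by the uniform bound $\|q_{\eps_\alpha}\|\le c^2$ and bilinearity, while the second tends to $0$ by weak* convergence; hence $q(x_0,x_0)=1$. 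Since $q\in K$, the Euclidean norm $\sqrt{q}$ is equivalent to $\|\cdot\|$, dominates it, and equals $1$ at $x_0$, so $\{\sqrt q\le 1\}$ is an inner ellipsoid at $x_0$.

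The step I expect to be most delicate is precisely this passage to the limit. A priori the inner ellipsoids available near $x_0$ (say those coming from the dense set of Lemma~\ref{lem:delicate}) could degenerate, becoming arbitrarily ``thin'' as one approaches $x_0$, in which case no limiting ellipsoid would survive. What rescues the argument is that almost transitivity lets me use translates of one \emph{fixed} ellipsoid, so the uniform eccentricity bound $\|\cdot\|\le|\cdot|_T\le c\|\cdot\|$ holds along the whole net; this simultaneously keeps the slab $K$ bounded (hence weak* compact) and, through the uniform lower bound, forces the weak* limit $q$ to remain positive-definite, i.e. a genuine ellipsoid rather than a degenerate quadric.
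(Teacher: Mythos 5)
Your proof is correct and is essentially the paper's own argument: the paper likewise translates one fixed ellipsoid of uniform eccentricity along isometries $T_n$ with $T_nx\to y$ and extracts the limiting ellipsoid by compactness, only it packages the limit as the ultrafilter limit $|z|_{\mathscr U}=\lim_{\mathscr U(n)}|T_nz|$ (verifying the parallelogram law in the limit) rather than as your weak*-convergent subnet of the forms $q_\eps$ in $(X\widehat{\otimes}X)^*$ --- the paper even calls its proof ``an ultraproduct argument in disguise'', and your Banach--Alaoglu subnet is the same compactness in another disguise. The only other cosmetic difference is orientation: the paper establishes the outer case directly and deduces the inner one by duality, while you do the reverse.
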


\begin{proof}
(The proof is an ultraproduct argument in disguise.)
We prove the existence of outer ellipsoids; the existence of the inner ones follows then by duality.
We know from the proof of Proposition~\ref{prop:Aut1} that there exist outer ellipsoids at some points of the sphere so the main difficulty lies in the {\em almost} assumption.
Assume $E$ is outer at $y\in S$ and
let $|\cdot|$ be the corresponding Euclidean norm. We have $|y|=\|y\|= 1$ and $|z|\leq\|z\|\leq M|z|$ and all $z\in X$.

Let $x\in S$ and for each $n$ take $T_n\in \Iso(X)$ such that $\|y-T_nx\|\leq 1/n$. Let $\mathscr U$ be a free ultrafilter on $\mathbb N$ and define a norm on $X$ by the formula
$$
|z|_\mathscr U= \lim_{\mathscr U(n)}|T_n z|.
$$
Then we have $
|\cdot|_\mathscr U\leq \|\cdot\|\leq M |\cdot|_\mathscr U$ and $|\cdot|_\mathscr U$ is homogenenous and satisfies the  parallelogram law so it is an Euclidean norm. Moreover,
$$
|x|_\mathscr U= \lim_{\mathscr U(n)}|T_n x|=\left| \lim_{n\to\infty} T_n x\right|=|y|=1.\qedhere
$$ 
\end{proof}
Thus, while AT renormings of Hilbert spaces are ST, there exist AT spaces that are not ST (consider $L_1$ or the Gurariy space).
We do not know whether $L_p$ is ST for some $p\in(0,\infty)$ different from $1$ and $2$.

\subsection{UMST vs. ``MST'', ``LST''}\label{sec:MST}
The attentive reader may have noticed that the halved relative of MT has been called UMST (instead of the more logical micro-semitransitivity, MST). There are two reasons for doing so: First, to be consistent with \cite{CDKKLM}, where UMST appears for the first time. Second, one can imagine another condition that could be labelled MST, this one: for every $x\in S$ and every $\eps>0$ there is $\delta=\delta(x,\eps)>0$ such that, whenever $y\in S$ and $\|y-x\|\le\delta$ there exists $T\in \Aut_1(X)$ such that $y=Tx$ and $\|T-{\bf I}_X\|\le\eps$. This property is strictly weaker than UMST (and even than ST) and indeed $\ell_1^2$ (which obviously is not ST, let alone UMST) has it.

We have not considered ``Lipschitz-semitransitivity'' (LST) because we have not much to say about, with the only exception of $1$-LST (which is (iii) in Theorem~\ref{th:1LT}). It is clear from Lemmas~\ref{lem:Lip(J)} and \ref{lem:k-homog} that if $X$ is LST, then the duality map $J:X\longrightarrow X^*$ is globally Lipschitz; hence if $X^*$ is also LST (not necessarily in the dual norm) then $X$ is isomorphic to a Hilbert space.

\section{Questions, proper names, and acknowledgements}\label{sec:misc}

\subsection*{Some more food for thought}
The content of this paper naturally flows into the following questions:

\begin{quest} Is every microtransitive Banach space isometric (or isomorphic) to a Hilbert space?
\end{quest}

Even for this cheap version of Mazur problem the
gap between an eventual affirmative answer and the existing knowledge is oceanic: on the one hand we do not even know what happens with LT spaces or norms and, on the other, we know of no UMST space whose norm is not Euclidean. As for concrete spaces one may ask:

\begin{quest}Let $q\neq 2$.
Does $L_q$ admit a microtransitive renorming? 
Does $\ell_q$ admit a uniformly semi-micro\-tran\-si\-tive norm? What if  $q$ is even?
\end{quest}

But note that no result in this paper can rule out 
the possibility that $\ell_4\oplus\ell_2$ has  a $\Lambda$-LT renorming for some $\Lambda\ge 3$!

Regarding the second part of the above question, we would like to draw to the reader's attention that Dilworth and Randrianantoanina, using the asymptotic structure of the spaces $\ell_q$ with masterly skill, have managed to prove that these do not have an AT renorming unless $q=2$; see \cite[Theorem 2.4]{DR}.

\subsection*{Wojty\'nski}
The notion of a LT norm was introduced in \cite{wojt}, where it is claimed that every  LT norm on a separable Banach space is Euclidean. The proof, however, has a fatal error in the very last line. 
This seems to have been detected in \cite[Note 1 on p. 152]{odinec}. The proof of Theorem~\ref{th:anal} resumes, and takes to its extreme, the healthy part of Wojty\'nski's proof.

\subsection*{Finet}
When \cite{th} was written I didn't know who Fabian was and the basic result of Aron, Finet and Werner in \cite{AFW} was not yet available (or at least I didn't know it).
The fact that AT renormings of Hilbert spaces have inner and outer ellipsoids at every point of the unit sphere, used \emph{there} in the proof of Theorem~\ref{th:AThilbert}, 
was deduced from a nice result of Finet {\em alone} (namely that if a Banach space admits a norm with modulus of convexity of power type $p$ then all its AT renormings have modulus of convexity of power type $p$; see \cite{finet} or \cite[Corollaries 5.6 and 5.7]{DGZ} and \cite[Theorem 1.2]{CS3} for a simplification of the proof)
throught the following fact: if $X$ is a renorming of a Hilbert space then $x\in S$ admits an outer ellipsoid if and only if the (Lovaglia, local) modulus of convexity of $X$ at $x$ is of power type 2.

\subsection*{Becerra \& Rodr\'\i guez-Palacios} The sphere of any possible counterexample to the isometric part of Mazur rotations problem would be a fascinating infinite-dimensional Riemannian Hilbert manifold. Considering now complex scalars, it is remarkable that, while  such a sphere should be a homogeneous manifold, its ``interior'', that is, the open ball $D$ should be a very rigid {\em domain} in which all biholomorphic automorphisms leave the origin fixed. (Note that the biholomorphic automorphisms act transitively on the open  balls of Hilbert spaces.)

This is a consequence of the work of Knaup, Upmeier and Vigu\'e, who proved the
 astonishing fact that each complex Banach space $X$ carries inside a closed subspace $X_{\text{sym}}$, called its symmetric part, having the following properties:
 \begin{itemize}
 \item The orbit of the origin under the action of the group of biholomorphic automorphisms of $D$, $\Hol(D)$, is  $X_{\text{sym}}\cap D=D_{\text{sym}}$, the unit ball of $X_{\text{sym}}$.
 
\item $\Hol(D_{\text{sym}})$ acts transitively on $D_{\text{sym}}$.
 \end{itemize}
This is all finely  explained  by Arazy in \cite{arazy}, see   \cite[Section 5.6]{CRP} for a more complete exposition. Being clear that $\Iso(X)$ leaves $X_{\text{sym}}$ invariant, it follows that if $X$ is AT (much weaker conditions suffice) then either $X_{\text{sym}}=0$ (and then the elements of $\Hol(D)$ are restrictions of those of $\Iso(X)$) or $X_{\text{sym}}=X$ and, therefore, $X$ is a JB*-triple.

Tarasov proved in \cite{tarasov} that the only smooth JB*-triples are the Hilbert spaces.
To the best of my knowledge, apart from this result, the most serious study of the connections between Mazur's problem and the JB*-triples was performed by Becerra and Rodr\'\i guez-Palacios in \cite{BP3}; see also \cite[Section~5]{becerra}, \cite[Section~6]{BRP}, and \cite{BRPW}.

\subsection*{Ivanov}
Although MT seems to be an overwhelmingly restrictive notion, the embarrasing truth is that I didn't even know how to prove the implication (ii) $\implies$ (i) of Theorem~\ref{th:1LT} until I saw Ivanov's argument on the internet and realized that his proof not only worked when $K$ is isometric (as stated in \cite{ivanov}) but also when $K$ is contractive (as is actually used in the proof of ``our'' result). Needless to say, I'm most grateful to Prof. Ivanov both for the proof itself and for the permission to include it here. I must add that the undisputable fact that Theorem~\ref{th:1LT} is the nicest result of the paper makes me feel a bit like the soloist in Brahms' violin concerto, whose most beautiful part is played by the oboe.

\end{document}